\theoremstyle{plain}
\newtheorem{thm}{Theorem}
\newtheorem{lemma}[thm]{Lemma} 
\newtheorem{prop}[thm]{Proposition}
\theoremstyle{remark}
\theoremstyle{definition}
\newtheorem{defi}[thm]{Definition}
\newtheorem{obs}[thm]{Observation}
\newtheorem{noname}[thm]{}
\newcommand\Cpx{{\mathbf C}}
\newcommand\Dc{{\mathcal{D}}}
\newcommand\Exp{{\operatorname{Exp}}}
\newcommand\Mcal{{\mathcal{M}}}
\newcommand\Nats{{\mathbf N}}
\newcommand\supp{\operatorname{supp}}
\begin{document}

\title[]{Upper Triangular Forms and Spectral Orderings in a II$_1$-factor}

\author[Noles]{J. Noles}
\address{Department of Mathematics, Texas A\&M University, College Station, TX, USA.}
\email{jnoles@math.tamu.edu}

\subjclass[2000]{47C15}


\begin{abstract} 
Dykema, Sukochev and Zanin used a Peano curve covering the support of the Brown measure of an operator $T$ in a diffuse, finite von Neumann algebra to give an ordering to the support of the Brown measure, and create a decomposition  $T = N + Q$, where $N$ is normal and $Q$ is s.o.t.-quasinilpotent.  In this paper we prove that a broader class of measurable functions can be used to order the support of the Brown measure giving normal plus s.o.t.-quasinilpotent decompositions.
\end{abstract}

\maketitle

\section{Introduction and description of results}

We start with a famous theorem of Schur (see for instance \cite{zha}) which will motivate this paper.

\begin{thm}
For every matrix $T \in M_n (\Cpx)$, there exists a unitary matrix $U \in M_n (\Cpx)$ such that $U^{-1}TU$ is an upper triangular matrix.
\end{thm}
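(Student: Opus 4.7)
The plan is to induct on $n$, the size of the matrix. The base case $n=1$ is trivial, since every $1\times 1$ matrix is already upper triangular and $U = (1)$ works.

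For the inductive step, I would exploit the fact that $\Cpx$ is algebraically closed: the characteristic polynomial $\det(\lambda I - T)$ has degree $n$ with complex coefficients, so by the fundamental theorem of algebra it has a root $\lambda_1 \in \Cpx$, which is an eigenvalue of $T$. Choose a unit eigenvector $v_1$ and use the Gram--Schmidt process to extend $\{v_1\}$ to an orthonormal basis $\{v_1, v_2, \ldots, v_n\}$ of $\Cpx^n$. Assembling these as columns yields a unitary $U_1 \in M_n(\Cpx)$ with $U_1 e_1 = v_1$. Since $T v_1 = \lambda_1 v_1$, the first column of $U_1^{-1} T U_1$ equals $\lambda_1 e_1$, so
\[
U_1^{-1} T U_1 \;=\; \begin{pmatrix} \lambda_1 & w^* \\ 0 & T' \end{pmatrix}
\]
for some $w \in \Cpx^{n-1}$ and some $T' \in M_{n-1}(\Cpx)$.

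Next, applying the inductive hypothesis to $T'$ produces a unitary $V \in M_{n-1}(\Cpx)$ making $V^{-1} T' V$ upper triangular. Set
\[
U_2 \;:=\; \begin{pmatrix} 1 & 0 \\ 0 & V \end{pmatrix},
\]
which is unitary in $M_n(\Cpx)$. A $2\times 2$ block calculation then shows that $U_2^{-1} U_1^{-1} T U_1 U_2$ is upper triangular, and $U := U_1 U_2$ is the desired unitary.

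The only non-trivial ingredient is the existence of the eigenvalue $\lambda_1$, which rests on the fundamental theorem of algebra; the remaining steps (Gram--Schmidt and block multiplication) are routine. Accordingly, there is no serious obstacle: Schur's theorem is essentially a clean consequence of algebraic closure combined with induction on the dimension, and this is the approach I would write up.
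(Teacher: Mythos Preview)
Your inductive argument is correct and is the standard textbook proof of Schur's theorem. The paper does not supply its own proof of this statement at all---it is quoted as classical background with a reference to \cite{zha}---so there is nothing to compare against; your write-up is precisely the kind of proof one finds in that reference.
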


The diagonal entries of $U^{-1}TU$ are the eigenvalues of $T$, repeated up to multiplicity, and $U$ can be chosen so that they appear in any order.  Hence each ordering of the spectrum of $T$ gives a decomposition $T = N + Q$, where $N$ is normal and $Q$ is nilpotent.

In \cite{DSZ}, Dykema, Sukochev and Zanin use Haagerup-Schultz projections to prove a related theorem in II$_1$-factors.

\begin{thm}
Let $\Mcal$ be a diffuse, finite von Neumann algebra with normal, faithful, tracial state $\tau$ and let $T \in \Mcal$.  Then there exist $N,Q \in \Mcal$ such that
\begin{enumerate}
\item
$T=N+Q$
\item
the operator $N$ is normal and the Brown measure of $N$ equals that of $T$
\item
The operator $Q$ is s.o.t.-quasinilpotent.
\end{enumerate}
\end{thm}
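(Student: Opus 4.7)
The plan is to imitate Schur's triangularization in the II$_1$-factor setting by constructing an increasing ``spectral flag'' of $T$-invariant projections via the Haagerup--Schultz machinery, and then reading off a normal ``diagonal'' $N$ and an s.o.t.-quasinilpotent ``off-diagonal'' $Q$. The Peano curve is the device that produces a total ordering on $\supp(\mu_T)$, which is what supplies the flag.

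First I would fix a continuous surjection $\pi:[0,1]\to\supp(\mu_T)$ onto the compact support of the Brown measure of $T$, set $K_t = \pi([0,t])$ for each $t\in[0,1]$, and let $P_t = P(T,K_t)$ denote the associated Haagerup--Schultz projection. Since $K_s\subseteq K_t$ for $s\le t$ and the Haagerup--Schultz projection is monotone in its Borel argument, the $P_t$ form an increasing family of $T$-hyperinvariant projections with $P_0 = 0$, $P_1 = 1$, $\tau(P_t) = \mu_T(K_t)$, and $TP_t = P_t T P_t$ for every $t$.

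Next I would build $N$ as the Stieltjes-type object ``$\int_0^1 \pi(t)\,dP_t$,'' realized through Riemann--Stieltjes approximants
\[
N_\mathcal{P} \;=\; \sum_{i=0}^{n-1} \pi(t_i)\,(P_{t_{i+1}} - P_{t_i})
\]
over partitions $\mathcal{P} = \{0 = t_0 < \cdots < t_n = 1\}$. Each $N_\mathcal{P}$ is normal, being a sum of mutually orthogonal scaled projections, and a Cauchy estimate exploiting uniform continuity of $\pi$ and the orthogonality of the increments $E_i = P_{t_{i+1}} - P_{t_i}$ gives convergence in $\|\cdot\|_2$ as the mesh shrinks. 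The limit $N$ is normal with $\mu_N = \mu_T$, because $\tau(E_i) = \mu_T(K_{t_{i+1}}\setminus K_{t_i})$ makes the spectral distribution of $N_\mathcal{P}$ a discretization of $\mu_T$ along the Peano parametrization, and these pushforwards converge weak-$*$ to $\mu_T$.

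Setting $Q = T - N$, the key algebraic fact is block-upper-triangularity of $T$ relative to any partition: $T$-invariance of every $P_t$ forces $E_j T E_i = 0$ for $j>i$, so
\[
T \;=\; \sum_{i} E_i T E_i \;+\; \sum_{i<j} E_i T E_j,
\]
and as the mesh shrinks the block-diagonal part tends to $N$ while the strictly-upper-triangular remainder tends to $Q$. The hard part, and the main obstacle, is to upgrade this to s.o.t.-quasinilpotence of $Q$, equivalently to $\mu_Q = \delta_0$. One needs the filtration $(P_t)$ to remain $Q$-hyperinvariant with unchanged traces, so that $Q$ is genuinely strictly upper-triangular along the same flag; combined with the observation that each block-diagonal correction $E_i T E_i - \pi(t_i) E_i$ carries Brown measure supported in a shrinking neighborhood of $0$ inside $E_i \Mcal E_i$, this forces $\mu_Q$ to collapse onto $\{0\}$. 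Verifying that hyperinvariance is preserved under the $\|\cdot\|_2$-limit $N_\mathcal{P}\to N$ and hence transferred to $Q = T - N$, together with the spectral-radius bookkeeping this entails, is the technical heart of the proof and the place where the fine properties of Haagerup--Schultz projections (well beyond mere existence of invariant subspaces) are essential.
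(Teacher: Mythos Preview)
Your construction of $N$ as the Riemann--Stieltjes limit $\int_0^1 \pi(t)\,dP_t$ is sound (in fact the convergence is in operator norm, not just $\|\cdot\|_2$, by uniform continuity of $\pi$ and orthogonality of the increments), and it does produce the same operator that the paper describes: the trace-preserving conditional expectation $\mathbb{E}_D(T)$ onto the abelian algebra $D$ generated by the $P_t$. Your weak-$*$ argument for $\mu_N=\mu_T$ is also fine. So parts (1) and (2) are essentially in agreement with the paper, just packaged differently.

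The genuine gap is in part (3). You correctly observe that for any fine partition the approximant $T-N_{\mathcal P}$ is block upper-triangular with diagonal blocks $E_iTE_i-\pi(t_i)E_i$ whose Brown measures sit in a ball of radius $\epsilon$, hence (by the Haagerup--Schultz splitting of Brown measure along invariant projections) $\nu_{T-N_{\mathcal P}}$ is supported in $B(0,\epsilon)$. But this says nothing directly about $\nu_{T-N}$: Brown measure is \emph{not} continuous under norm perturbation, and the exact diagonal block $E_i(T-N)E_i=E_iTE_i-E_iNE_i$ is a difference of two non-commuting operators (since $T$ is only upper-triangular, not block-diagonal, relative to $D$), so you cannot read off its Brown support from those of the summands. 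Your remark about ``hyperinvariance being preserved in the limit'' does not close this gap.

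The paper (following \cite{DSZ}, and in detail in the proof of Theorem~3) resolves this with two ingredients you are missing. First, a reduction: since the flag $(P_t)$ is $T$-invariant, Lemma~7 says $T$ and $\mathbb{E}_{D'}(T)$ have the same Brown measure, and the same flag is the Haagerup--Schultz flag for $\mathbb{E}_{D'}(T)$; so one may assume $T\in D'$. Second, once $T\in D'$, the diagonal blocks really do commute with $N$, so $T-\mathbb{E}_{D_n}(T)$ has Brown measure in a shrinking ball (Lemma~19), and an explicit $\limsup$ estimate using the Haagerup--Schultz description of $P_{T}(\overline{B_r})$ (item~10) turns this into $\nu_{T-N}=\delta_0$. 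Without the $D'$ reduction, your outline does not reach the conclusion.
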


The proof of Theorem 2 uses a Peano curve $\rho:[0,1] \to \overline{B_{\Vert T \Vert}}$.  The normal operator $N$ is created by taking the trace-preserving conditional expectation onto the von Neumann algebra generated by the Haagerup-Schultz projections of the operator $T$ associated with the sets $\rho([0,t])$ for $t \in [0,1]$.  These projections, along with the normal operator $N$, are determined by the ordering on the support of the Brown measure of $T$ given by $z_1 \leq z_2$ if and only if $\min (\rho^{-1}(z_1)) \leq \min (\rho^{-1} (z_2))$.  Theorem 2 generalizes the idea of using an oredering of the spectrum of the operator $T$ to write it as an uppertriangular form.

In this paper we will further generalize the idea of spectral orderings from the finite dimensional case to II$_1$-factors.  We show that normal plus s.o.t.-quasinilpotent decompositions are generated not only by continuous orderings, but by a large class of measurable orderings.

\begin{thm}
Let $\Mcal$ be a $II_1$-factor and $T \in \Mcal$.  Let $\nu_T$ be the Brown measure of $T$ and for a Borel set $B \subset \overline{B_{\Vert T \Vert}}$, let $P_T(B)$ be the Haagerup-Schultz projection for the operator $T$ associated to the set $B$.  Let $\psi : [0,1] \to \overline{B_{\Vert T \Vert}}$ be a Borel measurable function such that $\psi ([0,t])$ is Borel for all $t \in [0,1]$, $\{ z \in  \overline{B_{\Vert T \Vert}}: \psi ^{-1} (z)\, has\,\,a\,\,minimum \}$ is Borel, and 
$$\nu_T (\{ z \in \overline{B_{\Vert T \Vert}}: \psi ^{-1} (z)\, has\,\,a\,\,minimum \}) = 1.$$

Then there exists a spectral measure $E$ supported on $\supp (\nu_T)$ such that
\begin{enumerate}
\item $E(\psi ([0,t])) = P_T (\psi ([0,t]))$ for all $t \in [0,1]$,
\item $\tau (E (B)) = \nu_T (B)$ for all Borel $B \subset \overline{B_{\Vert T \Vert}}$, and
\item $T - \int_{\Cpx} zdE$ is s.o.t.-quasinilpotent.
\end{enumerate}
\end{thm}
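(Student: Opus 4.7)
The plan is to parametrize the spectral construction by an injective Borel map derived from $\psi$. Let $S = \{z \in \overline{B_{\Vert T\Vert}} : \psi^{-1}(z) \text{ has a minimum}\}$, which is Borel with $\nu_T(S) = 1$ by hypothesis, and define $\varphi : S \to [0,1]$ by $\varphi(z) = \min \psi^{-1}(z)$. Since the preimages $\psi^{-1}(z)$ for distinct $z$ are disjoint, $\varphi$ is injective, and its level sets $\{\varphi \leq t\} = \psi([0,t]) \cap S$ are Borel. Hence $\varphi$ is Borel measurable, and by the Lusin--Souslin theorem it is a Borel isomorphism from $S$ onto $\varphi(S) \subset [0,1]$.

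\textbf{Construction of the spectral measure.}
Set $P_t := P_T(\psi([0,t]))$; monotonicity of $P_T$ makes this family increasing, and $\nu_T(S) = 1$ together with $S \subset \psi([0,1])$ forces $P_1 = I$. Right-continuity of $t \mapsto P_t$ follows from $\bigcap_{s > t} \psi([0,s]) \cap S = \psi([0,t]) \cap S$ (immediate from $z \in S \cap \psi([0,s]) \iff \varphi(z) \leq s$) combined with the standard fact that Haagerup--Schultz projections are insensitive to $\nu_T$-null modifications of their argument. This produces a spectral measure $F$ on $[0,1]$ with $F([0,t]) = P_t$, valued in the abelian von Neumann subalgebra $\mathcal{A} \subset \mathcal{M}$ generated by $\{P_t\}$, and a check on the intervals $[0,t]$ identifies its scalar measure as $\mu := \tau \circ F = \varphi_*(\nu_T|_S)$; in particular $\mu(\varphi(S)^c) = 0$. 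Finally define $E(B) := F(\varphi(B \cap S))$ for Borel $B \subset \overline{B_{\Vert T\Vert}}$, which is well-defined (and is a spectral measure) because $\varphi|_S$ is a Borel isomorphism onto $\varphi(S)$.

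\textbf{Verifying (1), (2), and the main obstacle (3).}
For (1), the identity $\varphi(\psi([0,t]) \cap S) = [0,t] \cap \varphi(S)$ together with $\mu(\varphi(S)^c) = 0$ gives $E(\psi([0,t])) = F([0,t]) = P_t$. For (2), a change of variables using $\varphi_*$ and $\nu_T(S) = 1$ yields $\tau(E(B)) = \mu(\varphi(B \cap S)) = \nu_T(B)$. Setting $N := \int_{\Cpx} z\, dE$, the identity $\varphi^{-1}(s) = \psi(s)$ on $\varphi(S)$ rewrites $N = \int_0^1 \psi(s)\, dF(s)$, and checking $\tau(Na) = \tau(Ta)$ on generators of $\mathcal{A}$ identifies $N$ with the trace-preserving conditional expectation $E_\mathcal{A}(T)$. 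Property (3) is the main obstacle. I would adapt the strategy of \cite{DSZ}: partition $[0,1]$ into intervals $(t_{k-1}, t_k]$ of small $\mu$-measure, form approximants $N_n = \sum_k \psi(s_k)(P_{t_k} - P_{t_{k-1}})$, and exploit the $T$-invariance $(I - P_{t_k}) T P_{t_k} = 0$ of the Haagerup--Schultz projections to control $T - N_n$ and deduce s.o.t.-quasinilpotence in the limit. The difficulty absent in \cite{DSZ} is that continuity of the Peano parametrization was used there to bound the diameter of $\psi((t_{k-1}, t_k])$ uniformly in $k$; here the sets $\psi((t_{k-1}, t_k])$ have small $\nu_T$-mass but possibly large diameter, so pointwise control of $\psi$ must be replaced by measure-theoretic estimates on these blocks, and the crux of the proof is showing these still suffice.
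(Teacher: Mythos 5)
Your construction of $E$ and your verification of parts (1) and (2) are essentially correct, and they take a genuinely different route from the paper: you push $\nu_T$ forward under $\varphi(z)=\min\psi^{-1}(z)$, invoke Lusin--Souslin to see that $\varphi$ is a Borel isomorphism onto its image, build a resolution $F$ of the nest $P_t=P_T(\psi([0,t]))$, and set $E(B)=F(\varphi(B\cap S))$; the paper instead proves by hand (its Lemma 11, a $\sigma$-algebra argument using the hypothesis that the sets $\psi([0,t])$ are Borel) that $\psi$ maps Borel subsets of the set of minima to Borel sets, and then defines $E(B)$ by outer regularity as the infimum of $F(v)$ over open $v\supset\psi^{-1}(B)$, checking multiplicativity and countable additivity through trace estimates. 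Your route is arguably cleaner for (1) and (2), at the cost of a heavier descriptive-set-theoretic tool; do note that right-continuity of $t\mapsto P_t$ should be justified by a trace argument (the decreasing limit dominates $P_T(\psi([0,t]))$ and has the same trace because $\nu_T(S)=1$), which is exactly the ``insensitivity to null modifications'' you appeal to.

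The genuine gap is part (3), which is the heart of the theorem, and your sketch would not close it as written. Approximants $N_n=\sum_k\psi(s_k)(P_{t_k}-P_{t_{k-1}})$ built from a partition of $[0,1]$ into blocks of small $\mu$-mass need not converge to $\int_\Cpx z\,dE$, and the Brown measure of $T-N_n$ cannot be localized near $0$, precisely because the sets $\psi((t_{k-1},t_k])$ may have large diameter --- you name this obstacle but offer no way around it. The paper's missing idea is to partition the \emph{range} rather than the domain: using the already-constructed $E$, it cuts the plane into dyadic squares $A_{n,k}$ of side comparable to $\Vert T\Vert 2^{-n}$, lets $D_n$ be generated by the projections $E(A_{n,k})$ and $D$ by all $E(\psi([0,t]))$, and shows $\mathbb{E}_{D_n}(T)\to\int_\Cpx z\,dE$ in norm (Proposition 17, via Brown's Lidskii-type theorem). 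Then, for $T\in D'$, it proves that $\nu_{TE(A_{n,k})}$ is concentrated in $A_{n,k}$ (Proposition 18, by approximating $\psi^{-1}(A_{n,k})$ by open sets and applying Theorem 8), so the Brown measure of $T-\mathbb{E}_{D_n}(T)$ lies in a ball of radius $O(2^{-n})$ (Lemma 19), and the binomial/Haagerup--Schultz subspace argument of \cite{DSZ} yields that $T-\mathbb{E}_D(T)$ is s.o.t.-quasinilpotent. Finally --- a second point your outline does not address --- the projections $E(A_{n,k})$ are in general not $T$-invariant, so the case $T\notin D'$ requires a separate reduction: the paper replaces $T$ by $\mathbb{E}_{D'}(T)$, checks that this preserves the Brown measure (Lemma 7) and the projections $P_T(\psi([0,t]))$, and hence preserves $E$, concluding that $T-\int_\Cpx z\,dE$ and $\mathbb{E}_{D'}(T)-\int_\Cpx z\,dE$ have the same Brown measure $\delta_0$. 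Without the range-partition and this $T\in D'$ reduction (or substitutes for them), your argument stalls exactly at the point you identify as the crux.
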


In particular the conclusion holds if $\psi$ is continuous or is a Borel isomorphism.  We leave open the following question: Given a function $\varphi$ which satisfies the hypotheses of Theorem 3, does there exist a Borel ismorphism $\psi$ such that $\varphi$ and $\psi$ generate the same spectral measure?

Note that part 2 of theorem 3  implies that $\int_{\Cpx} zdE$ and $T$ have that same Brown measure.

\section{Background: Conditional expectation, Brown measure, Haagerup-Schultz projections and s.o.t.-quasinilpotent operators}

This section includes some background necessary for the proof of Theorem 3.  Throughout this section $\Mcal$ is a II$_1$-factor with trace $\tau$, and $T \in \Mcal$.

\begin{defi}Let $\mathcal{N}$ be a von Neumann subalgebra of $\Mcal$.  Then there exists a unique trace-preserving faithful normal linear map $\mathbb{E}_{\mathcal{N}} : \Mcal \to \mathcal{N}$.  $\mathbb{E}_{\mathcal{N}}$ satisfies the properties
\begin{enumerate}
\item $\mathbb{E}_{\mathcal{N}}$ is completely positive and unital
\item For any $T_1,T_2 \in \mathcal{N}$ and any $S \in \Mcal$, $\mathbb{E}_{\mathcal{N}}(T_1ST_2) = T_1\mathbb{E}_{\mathcal{N}}(S)T_2.$
\end{enumerate}
The map $\mathbb{E}_{\mathcal{N}}$ is called the {\bf conditional expectation} of $\Mcal$ onto $\mathcal{N}$.
\end{defi}

\begin{defi}
In \cite{Br86}, Brown constructed and proved unique a probability measure $\nu_T$ supported on a compact subset of spec$(T)$ such that for any $\lambda \in \Cpx$,
$$\tau(\log(\vert T-\lambda \vert)) = \int_{\Cpx} \log (\vert z - \lambda\vert) d\nu_T(z).$$
$\nu_T$ is called the {\bf Brown measure} of T.
\end{defi}

In the case that $T$ is normal, Brown's construction gives $\nu_T = \tau \circ E$, where $E$ is the projection valued spectral decomposition measure of $T$.

The following theorem of Haagerup and Schultz is the cornerstone of our proof.

\begin{thm}
Let $\Mcal$ be a II$_1$-factor with trace $\tau$ and let $T \in \Mcal$.  For every Borel set $B \subset \Cpx$, there exists a unique projection $P_T(B) \in \Mcal$ such that\
\begin{enumerate}
\item
$\tau (P_T(B)) = \nu_T (B)$, where $\nu_T$ is the Brown measure of $T$,
\item
$TP_T(B) = P_T(B)TP_T(B)$,
\item
if $P_T(B) \neq 0$, then the Brown measure of $TP_T(B)$ considered as an element of $P_T(B) \Mcal P_T(B)$ is concentrated in $B$ and
\item
if $P_T(B) \neq 1$, then the Brown measure of $(1-P_T(B))T$, considered as an element of $(1-P_T(B))\Mcal (1-P_T(B))$, is concentrated in $\Cpx \setminus B$.
\end{enumerate}
Moreover, $P_T(B)$ is $T$-hyperinvariant and if $B_1 \subset B_2 \subset \Cpx$ are Borel sets, then $P_T(B_1) \leq P_T(B_2)$.
\end{thm}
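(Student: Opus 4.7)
The plan is to transport the natural spectral measure of the filtration $\{P_T(\psi([0,t]))\}_{t \in [0,1]}$ to $\Cpx$ via the inverse of the ordering map. Set $\Omega := \{z \in \overline{B_{\Vert T \Vert}} : \psi^{-1}(z) \text{ has a minimum}\}$ and define $\alpha : \Omega \to [0,1]$ by $\alpha(z) = \min \psi^{-1}(z)$. Since $\alpha^{-1}([0,t]) = \Omega \cap \psi([0,t])$ is Borel by hypothesis, $\alpha$ is Borel measurable, and it is injective because $\alpha(z_1) = \alpha(z_2) = t$ forces $z_1 = \psi(t) = z_2$. By the Lusin--Souslin theorem (the injective Borel image of a Borel set in a Polish space is Borel), $\alpha(\Omega) \subseteq [0,1]$ is Borel and $\alpha : \Omega \to \alpha(\Omega)$ is a Borel isomorphism whose inverse coincides with $\psi|_{\alpha(\Omega)}$.

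Next, the projections $q(t) := P_T(\psi([0,t]))$ are increasing in $t$, commute, and generate an abelian von Neumann subalgebra $\Ac \subseteq \Mcal$. After taking the right-continuous modification of $t \mapsto q(t)$ (a change on at most countably many $t$), we obtain a projection-valued measure $F$ on $[0,1]$ with $F([0,t]) = q(t)$ and $\tau \circ F = \alpha_*\nu_T$. For Borel $B \subseteq \overline{B_{\Vert T \Vert}}$, define
$$E(B) := F(\alpha(B \cap \Omega)).$$
The Borel-isomorphism property of $\alpha$ makes the right-hand side well defined, and countable additivity of $E$ follows from that of $F$ together with the injectivity of $\alpha$ on $\Omega$. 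Property (1) reduces to $\alpha(\psi([0,t]) \cap \Omega) = \alpha(\Omega) \cap [0,t]$ combined with $\tau(F([0,t] \setminus \alpha(\Omega))) = 0$. Property (2) is the change of variables $\tau(E(B)) = \alpha_*\nu_T(\alpha(B \cap \Omega)) = \nu_T(B \cap \Omega) = \nu_T(B)$, using $\nu_T(\Omega) = 1$ and injectivity of $\alpha$ on $\Omega$.

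For property (3), set $N := \int_\Cpx z\,dE \in \Ac$. The Haagerup--Schultz invariance $Tq(t) = q(t)Tq(t)$ and the moment identity $\tau(q(t)Tq(t)) = \int_{\psi([0,t])} z\,d\nu_T$ together give $\tau((q(t)-q(s))(T - N)) = 0$ for all $s \leq t$; by linearity and normality this identifies $N$ with the conditional expectation $\mathbb{E}_\Ac(T)$. To prove that $Q := T - N$ is s.o.t.-quasinilpotent I would adapt the partition approximation of \cite{DSZ}: for partitions $0 = t_0 < \cdots < t_m = 1$ and choices $z_i \in \psi([t_{i-1}, t_i])$, the simple spectral operator $N_m = \sum_i z_i(q(t_i) - q(t_{i-1}))$ yields a block decomposition of $T - N_m$ whose diagonal blocks have, by property (3) of the Haagerup--Schultz theorem, Brown measures supported in $\psi([t_{i-1}, t_i]) - z_i$. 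The main obstacle is that in our setting $\psi$ need not be continuous, so these blocks need not shrink as the mesh of the partition shrinks.

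To circumvent this, I would apply Lusin's theorem to the Borel isomorphism $\alpha^{-1} : \alpha(\Omega) \to \Omega$: for every $\varepsilon > 0$ there is a compact $K_\varepsilon \subseteq \alpha(\Omega)$ with $\alpha_*\nu_T(\alpha(\Omega) \setminus K_\varepsilon) < \varepsilon$ on which $\alpha^{-1}$ is continuous. On the Haagerup--Schultz subspace associated to $\alpha^{-1}(K_\varepsilon)$, the restriction of $\alpha^{-1}$ functions as a Peano-style continuous parameterization, so the DSZ partition argument applies directly there and yields s.o.t.-quasinilpotence of the corresponding diagonal difference. On the complementary block of trace less than $\varepsilon$, property (4) of the Haagerup--Schultz theorem together with the norm bound $\Vert Q \Vert \leq 2\Vert T \Vert$ controls the contribution to $\nu_Q$. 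Letting $\varepsilon \to 0$ and invoking continuity of the Fuglede--Kadison determinant defining the Brown measure forces $\nu_Q = \delta_0$, which is equivalent to $Q$ being s.o.t.-quasinilpotent.
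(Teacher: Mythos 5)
There is a fundamental mismatch here: the statement you were asked to prove is the Haagerup--Schultz theorem itself (Theorem 6 of the paper) --- the assertion that for \emph{every} Borel set $B \subset \Cpx$ there exists a \emph{unique} projection $P_T(B)$ satisfying the trace identity, the invariance condition $TP_T(B) = P_T(B)TP_T(B)$, and the two Brown-measure concentration properties, together with hyperinvariance and monotonicity. Your proposal does not prove this. From its second paragraph onward it sets $q(t) := P_T(\psi([0,t]))$ and freely invokes ``property (3) of the Haagerup--Schultz theorem'' and ``property (4) of the Haagerup--Schultz theorem,'' i.e., it assumes the existence and the listed properties of the very projections whose existence is the content of the statement. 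What you have actually sketched is a construction of the spectral measure $E$ and the normal plus s.o.t.-quasinilpotent decomposition --- that is the paper's Theorem 3, for which the Haagerup--Schultz theorem is an input, not an output. (The paper itself does not prove Theorem 6 either; it quotes it from the 2009 paper of Haagerup and Schultz.)

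A genuine proof of the stated theorem is a substantial piece of work of a completely different character. One must first construct $P_T(\overline{B_r})$ for closed balls as the projection onto the subspace $\mathcal{H}_r = \{\xi : \exists\, \xi_n \to \xi \text{ with } \limsup_n \Vert T^n \xi_n\Vert^{1/n} \le r\}$ (the characterization the paper records as item (10)), verify that this subspace is closed, $T$-hyperinvariant, and that the associated projection has trace $\nu_T(\overline{B_r})$ --- this last step requires delicate estimates relating the growth of $\Vert T^n\xi\Vert$ to the Fuglede--Kadison determinant and the Brown measure. One then obtains $P_T$ for translated balls $\overline{B_r} + \lambda$ by applying the construction to $T - \lambda$, extends to arbitrary Borel sets by countable lattice operations on these projections, and proves uniqueness from the concentration properties (3) and (4). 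None of these ingredients appears in your proposal, and none can be extracted from the ordering map $\alpha$, the Lusin--Souslin argument, or the partition approximation you describe, since all of that machinery sits downstream of the theorem. If your intended target was Theorem 3 of the paper, the proposal is at least aimed in the right direction (it parallels the paper's strategy of building $E$ from the filtration $P_T(\psi([0,t]))$ and controlling $T - \int z\,dE$ by a dyadic approximation), but as a proof of the statement actually posed it is circular.
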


The projection $P_T(B)$ in Theorem 6 is called the Haagerup-Schultz projection of $T$ associated to the set $B$.

The following two results, from \cite{DSZ2} and \cite{HS2}, respectively, will be crucial to the proof of part 3 of theorem 3.

\begin{lemma}
For any increasing, right-continuous family of $T$-invariant projections $(q_t)_{0\le t\le 1}$ with $q_0=0$ and $q_1=1$,
letting $\Dc$ be the von Neumann algebra generated by the set of all the $q_t$ and $\Dc '$ be the relative commutant of $\Dc$ in $\Mcal$, and letting $\Exp_{\Dc '}$ be the $\tau$ preserving conditional expectation, 
the Fuglede--Kadison determinants of $T$ and $\Exp_{\Dc'}(T)$ agree.
Since the same is true for $T-\lambda$ and $\Exp_{\Dc'}(T)-\lambda$ 
for all complex numbers $\lambda$,
we have that the Brown measures of $T$ and $\Exp_{\Dc'}(T)$ agree.
\end{lemma}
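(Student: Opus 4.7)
The plan is to prove the determinant identity $\Delta(T) = \Delta(\Exp_{\Dc'}(T))$ first for a finite family of $T$-invariant projections, then extend via a martingale approximation, and finally apply the identity to $T - \lambda$ and invoke Brown's uniqueness to conclude.

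For the \emph{finite case}, suppose $(q_t)$ takes only finitely many values $0 = p_0 < p_1 < \cdots < p_n = 1$, and set $e_i = p_i - p_{i-1}$. Then $\Dc$ is the finite-dimensional abelian algebra spanned by $\{e_i\}$ and its commutant is $\Dc' = \{X \in \Mcal : X = \sum_i e_i X e_i\}$, so $\Exp_{\Dc'}(T) = \sum_i e_i T e_i$. The $T$-invariance of each $p_j$ (i.e.\ $Tp_j = p_jTp_j$) forces $e_j T e_i = 0$ whenever $i < j$, so $T = D + N$ with $D := \Exp_{\Dc'}(T)$ block diagonal and $N := \sum_{i<j} e_i T e_j$ strictly block upper triangular. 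The identity $\Delta(T) = \Delta(D)$ reduces by induction on $n$ to the two-block case: if $p$ is a $T$-invariant projection, then $\Delta(T) = \Delta_{p\Mcal p}(pTp)^{\tau(p)}\Delta_{p^\perp\Mcal p^\perp}(p^\perp T p^\perp)^{\tau(p^\perp)}$. This single-cut identity is the standard Haagerup--Schultz block-determinant formula, which follows by factoring $T = D(1 + D^{-1}N)$ when $D$ is invertible (where $\Delta(1 + D^{-1}N) = 1$ because $D^{-1}N$ is nilpotent, so $1 + D^{-1}N$ has spectrum $\{1\}$), and handling the non-invertible case by replacing $T$ with $T + \lambda 1$ for $\lambda$ off a small exceptional set and invoking upper semicontinuity.

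To \emph{pass to the general case}, note that $\Dc$ is generated by a countable subfamily $\{q_{t_k}\}$: take a countable dense set $\{t_k\} \subset [0,1]$ and use right-continuity of $t \mapsto q_t$. Let $\Dc_n$ be the finite abelian subalgebra generated by $\{q_{t_k}: k \leq n\}$; then $\Dc_n \uparrow \Dc$ in the strong operator topology, hence $\Dc_n' \downarrow \Dc'$, and the reverse martingale convergence theorem yields $\Exp_{\Dc_n'}(T) \to \Exp_{\Dc'}(T)$ in $L^2(\Mcal, \tau)$. The finite case gives $\Delta(T) = \Delta(\Exp_{\Dc_n'}(T))$ for each $n$. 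Applying the same identity to $T - \lambda$ for every $\lambda \in \Cpx$ (valid because $\lambda \in \Dc'$ and the $q_t$ remain $(T-\lambda)$-invariant) and passing to the limit yields $\Delta(T - \lambda) = \Delta(\Exp_{\Dc'}(T) - \lambda)$ for all $\lambda$. Brown's construction recovers the Brown measure as the distributional Laplacian of $\lambda \mapsto \log \Delta(T - \lambda)$, so the two measures coincide.

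The \emph{main obstacle} is the passage to the limit, because the Fuglede--Kadison determinant is only upper semicontinuous in the strong topology; getting equality (not mere inequality) in $\lim_n \Delta(\Exp_{\Dc_n'}(T)) = \Delta(\Exp_{\Dc'}(T))$ therefore requires care. The natural resolution is to establish convergence of the subharmonic functions $\lambda \mapsto \log\Delta(\cdot - \lambda)$ in $L^1_{\mathrm{loc}}(\Cpx)$, which corresponds to weak convergence of the Brown measures and is the Haagerup--Schultz continuity statement underlying the lemma. This martingale continuity of Brown measures is precisely what is established in \cite{HS2} and refined in \cite{DSZ2}.
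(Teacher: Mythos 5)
First, a point of comparison: the paper does not prove this lemma at all --- it is imported verbatim from \cite{DSZ2} as background (``The following two results, from \cite{DSZ2} and \cite{HS2}, respectively\dots''), so the only meaningful comparison is with the argument in that reference. Your finite-stage reduction is correct and is indeed how that argument begins: for a finite chain $0=p_0<\cdots<p_n=1$ the relative commutant is the block-diagonal algebra, $\Exp_{\Dc'}(T)=\sum_i e_iTe_i$, the invariance $Tp_j=p_jTp_j$ makes $T$ block upper triangular, and iterating the Haagerup--Schultz determinant formula (Theorem 8 of the paper) gives $\Delta(T)=\prod_i\Delta_{e_i\Mcal e_i}(e_iTe_i)^{\tau(e_i)}=\Delta(\Exp_{\Dc'}(T))$. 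The countable generation of $\Dc$ from right-continuity, the identification of $\bigcap_n\Dc_n'$ with $\Dc'$, and the reverse-martingale convergence $\Exp_{\Dc_n'}(T)\to\Exp_{\Dc'}(T)$ in $\Vert\cdot\Vert_2$ are all fine.

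The genuine gap is precisely the step you flag and then delegate to the literature. Upper semicontinuity of the Fuglede--Kadison determinant along $\Vert\cdot\Vert_2$-convergent bounded sequences gives one inequality for free, namely $\Delta(\Exp_{\Dc'}(T)-\lambda)\ge\limsup_n\Delta(\Exp_{\Dc_n'}(T)-\lambda)=\Delta(T-\lambda)$ for every $\lambda$; the entire content of the lemma is the reverse inequality, and your proposal contains no argument for it. ``Martingale continuity of Brown measures'' is not a citable theorem: Brown measure and $\Delta$ are genuinely discontinuous in a II$_1$ factor even under operator-norm convergence, and \cite{HS2} contains no such continuity statement (it enters the present paper only through the block-determinant formula). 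Nor does the one-sided inequality self-improve by potential theory: writing $u(\lambda)=\log\Delta(\Exp_{\Dc'}(T)-\lambda)$ and $v(\lambda)=\log\Delta(T-\lambda)$ as logarithmic potentials of the two Brown measures, one computes $\int_{\Cpx}(u-v)\,dm=\tfrac{\pi}{2}\left(\int|z|^2\,d\nu_{\Exp_{\Dc'}(T)}-\int|z|^2\,d\nu_T\right)$, so $u\ge v$ forces $u=v$ only if you can also control the second absolute moments, which you cannot from what you have established. In short, reducing the problem to ``$L^1_{\mathrm{loc}}$ convergence of the subharmonic functions'' restates the lemma rather than proving it; to close the argument you must either reproduce the actual limiting argument of \cite{DSZ2} or, as the paper does, cite that lemma as a black box.
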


\begin{thm}
If $T \in \Mcal$, and if $p \in \Mcal$ is a projection such that $Tp = pTp$, so that we may write $T = \left(\begin{matrix}A&B\\0&C\end{matrix}\right), where $A = Tp and $C = (1-p)T$, then
$$\Delta_{\Mcal} (T) = \Delta_{p \Mcal p} (A)^{\tau(p)} \Delta_{(1-p) \Mcal (1-p)} (C)^{\tau (1-p)}$$
and
$$\nu_T = \tau(p) \nu_A + \tau (1-p) \nu_C,$$
where $A$ is considered as an element of $p\Mcal p$ and $C$ is considered as an element of $(1-p) \Mcal (1-p)$.
\end{thm}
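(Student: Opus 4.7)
My plan is to first establish the determinant identity $\Delta_{\Mcal}(T) = \Delta_{p\Mcal p}(A)^{\tau(p)} \Delta_{(1-p)\Mcal(1-p)}(C)^{\tau(1-p)}$ under the additional hypothesis that $A$ and $C$ are invertible in their respective corners, then extract the Brown-measure identity via the substitution $T \mapsto T - \lambda$ together with Brown's characterization of $\nu_T$, and finally deduce the unrestricted determinant formula from the Brown-measure identity.

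For the invertible case I use the factorization $T = D(1 + N)$, with $D := A + C$ ``block diagonal'' (living in $p\Mcal p \oplus (1-p)\Mcal(1-p)$) and $N := pA^{-1}B(1-p) \in p\Mcal(1-p)$ nilpotent with $N^2 = 0$. Multiplicativity of $\Delta_\Mcal$ reduces the problem to computing $\Delta_\Mcal(D)$ and $\Delta_\Mcal(1+N)$. For the first, orthogonality of the two corners gives $|D|^2 = |A|^2 + |C|^2$, whence $|D| = |A| + |C|$ and $\log|D| = \log|A| + \log|C|$ with each summand supported in its own corner; combining this with $\tau|_{p\Mcal p} = \tau(p)\,\tau_{p\Mcal p}$ yields
$$\log \Delta_\Mcal(D) = \tau(p) \log \Delta_{p\Mcal p}(A) + \tau(1-p) \log \Delta_{(1-p)\Mcal(1-p)}(C).$$
For $\Delta_\Mcal(1+N)$, I observe that the self-adjoint unitary $V := 2p - 1$ satisfies $VNV = -N$ (a direct check using $N = pN(1-p)$), so $V(1+N)V^{-1} = 1 - N$ and unitary invariance of $\Delta_\Mcal$ yields $\Delta_\Mcal(1+N) = \Delta_\Mcal(1-N)$. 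Since $(1+N)(1-N) = 1 - N^2 = 1$, multiplicativity then forces $\Delta_\Mcal(1+N)^2 = 1$, and positivity (since $1+N$ is invertible) gives $\Delta_\Mcal(1+N) = 1$.

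For the general case I apply the invertible-case identity to $T - \lambda$ for $\lambda$ outside the compact set $\mathrm{spec}_{p\Mcal p}(A) \cup \mathrm{spec}_{(1-p)\Mcal(1-p)}(C)$; the diagonal blocks $A - \lambda p$ and $C - \lambda(1-p)$ of $T - \lambda$ are invertible there. Combined with Brown's formula $\log\Delta(S - \lambda) = \int \log|z - \lambda|\,d\nu_S(z)$ for $S = T, A, C$ in the appropriate algebras, this yields
$$\int \log|z - \lambda|\,d\mu(z) = 0, \qquad \mu := \nu_T - \tau(p)\nu_A - \tau(1-p)\nu_C,$$
for all $\lambda$ with $|\lambda|$ sufficiently large. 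Since $\mu$ is a compactly supported real-valued signed measure of total mass zero, expanding $\log|z - \lambda| = \log|\lambda| - \tfrac{1}{2}\sum_{k \geq 1} k^{-1}(z^k \lambda^{-k} + \bar z^k \bar\lambda^{-k})$ and matching harmonics in $\arg\lambda$ forces every moment $\int z^k d\mu$ to vanish, whence $\mu = 0$ by Stone--Weierstrass. This is the Brown-measure identity; the general determinant formula then follows from $\log\Delta_\Mcal(T) = \int \log|z|\,d\nu_T(z)$ together with the just-proven decomposition of $\nu_T$. The main obstacle in this plan is converting the analytic identity valid for $\lambda$ outside a compact set into an identity of measures; the Laurent-expansion argument handles this, though an equivalent route would compare logarithmic potentials on the unbounded component of the resolvent set.
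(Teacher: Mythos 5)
This statement is Theorem 8 of the paper, which is quoted from Haagerup--Schultz \cite{HS2} as background and is not proved in the paper, so there is no internal proof to compare against; I can only assess your argument on its own terms. Your treatment of the invertible case is correct and clean: the factorization $T=(A+C)(1+A^{-1}B)$, the computation $\log\Delta_\Mcal(A+C)=\tau(p)\log\Delta_{p\Mcal p}(A)+\tau(1-p)\log\Delta_{(1-p)\Mcal(1-p)}(C)$ via orthogonality of the corners, and the trick $V(1+N)V=1-N$ with $V=2p-1$ forcing $\Delta_\Mcal(1+N)=1$ all check out.

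The gap is in the passage from the identity $\int\log|z-\lambda|\,d\mu(z)=0$, valid only for $\lambda$ outside the compact set $K=\mathrm{spec}_{p\Mcal p}(A)\cup\mathrm{spec}_{(1-p)\Mcal(1-p)}(C)$, to the conclusion $\mu=0$. Your Laurent expansion yields only the holomorphic moments $\int z^k\,d\mu$ and their conjugates $\int\bar z^k\,d\mu$; it does not yield the mixed moments $\int z^j\bar z^k\,d\mu$, so Stone--Weierstrass does not apply (the functions $z^k,\bar z^k$ do not generate a dense subalgebra of $C(K)$ --- that requires products such as $z\bar z$). Concretely, a compactly supported real signed measure of total mass zero can have identically vanishing logarithmic potential on the unbounded component of the complement of its support: take $\mu=\sigma-\delta_0$ with $\sigma$ the normalized arc-length measure on the unit circle; then $\int\log|z-\lambda|\,d\mu(z)=\log|\lambda|-\log|\lambda|=0$ for $|\lambda|>1$ and all moments $\int z^k\,d\mu$, $\int \bar z^k\,d\mu$ vanish, yet $\mu\neq 0$. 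Since $\mathrm{spec}(A)$ and $\mathrm{spec}(C)$ can have nonempty interior, nothing in your setup rules out this phenomenon. The information you actually need is the determinant identity for $\lambda$ \emph{inside} the spectra (or at least for Lebesgue-a.e.\ $\lambda\in\Cpx$), so that one can take distributional Laplacians in $\lambda$ and read off $\nu_T=\tau(p)\nu_A+\tau(1-p)\nu_C$ from Brown's definition $\nu_S=\frac{1}{2\pi}\nabla^2_\lambda\log\Delta(S-\lambda)$. Extending the determinant identity to non-invertible diagonal blocks is exactly the hard step in the Haagerup--Schultz argument (it is done by an approximation such as $\Delta(S)^2=\inf_{\varepsilon>0}\Delta(S^*S+\varepsilon)$ rather than by potential theory), and your proposal does not supply it.
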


\begin{defi}
It was shown in \cite{HS09} that for any $T \in \Mcal$, $((T^*)^nT^n)^{1/2n}$ converges in the strong operator topology as $n$ approaches $\infty$.  An operator $T$ is called {\bf s.o.t.-quasinilpotent} if $((T^*)^nT^n)^{1/2n} \to 0$ in the strong operator topology as $n \to \infty$.
\end{defi}

It was also shown in \cite{HS09} that $T$ is s.o.t.-quasinilpotent if and only if the Brown measure of $T$ is concentrated at $0$.

We will also need a characterization from \cite{HS09} of the Haagerup-Schultz projection of $T$ associated with the ball $\overline{B_r} = \{\vert z \vert \leq r \}$.

\begin{noname}
Suppose $\Mcal \leq \mathcal{B}(\mathcal{H})$.  Define a subspace $\mathcal{H}_r$ of $\mathcal{H}$ by
$$\mathcal{H}_r = \{ \xi \in \mathcal{H}: \exists \xi_n \to \xi,\,\, \mathrm{with\,} \limsup_{n \to \infty} \Vert T^n \xi_n \Vert^{1/n} \leq r \}.$$
Then the projection onto $\mathcal{H}_r$ is equal to $P_T(\overline{B_r})$.
\end{noname}

\section{Construction of the spectral measure $E$}

Throughout this section, $\Mcal$, $T$, $\nu_T$, $P_T$ and $\psi$ will be as described in Theorem 3, $Z$ will denote $\{ z \in  \overline{B_{\Vert T \Vert}} : \psi^{-1} (z)\, \mathrm{ has\,\,a\,\,minimum} \}$ and $Y$ will denote $\overline{B_{\Vert T \Vert}} \setminus Z$.

We first define a Borel measure on the unit interval which will be useful in later proofs.

\begin{lemma}
Let $X = \{ \min (\psi^{-1}(z)): z \in \overline{B_{\Vert T \Vert}}\}$.  If $b \subset [0,1]$ is Borel, then $\psi (b \cap X)$ is Borel.
\end{lemma}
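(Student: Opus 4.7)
The plan is to exhibit $\psi(b\cap X)$ directly as the preimage of $b$ under a Borel measurable map $\mu\colon Z\to [0,1]$, thereby avoiding any need to analyze $b\cap X$ as a subset of $[0,1]$ on its own terms.

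Define $\mu\colon Z\to [0,1]$ by $\mu(z)=\min(\psi^{-1}(z))$; this is well-defined by the definition of $Z$. The restriction $\psi|_X$ is a bijection from $X$ onto $Z$: surjectivity is immediate, and injectivity holds because if $t_1,t_2\in X$ satisfy $\psi(t_1)=\psi(t_2)=z$, then $t_1=\min(\psi^{-1}(z))=t_2$. One checks that $\mu$ is the inverse: $\mu\circ(\psi|_X)=\mathrm{id}_X$ and $(\psi|_X)\circ\mu=\mathrm{id}_Z$. This gives the key identity
\[
\psi(b\cap X)=\mu^{-1}(b)\qquad(b\subseteq[0,1]).
\]
Indeed, if $t\in b\cap X$ then $\psi(t)\in Z$ and $\mu(\psi(t))=t\in b$, so $\psi(t)\in\mu^{-1}(b)$; conversely, for $z\in\mu^{-1}(b)$ we put $t=\mu(z)\in b$ and note $t\in X$ with $\psi(t)=z$.

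The remainder of the argument reduces to showing $\mu$ is Borel measurable as a map $Z\to[0,1]$. Since the intervals $[0,r]$ ($r\in[0,1]$) generate the Borel $\sigma$-algebra on $[0,1]$, it suffices to check
\[
\mu^{-1}([0,r])=\{z\in Z:\psi^{-1}(z)\cap[0,r]\ne\emptyset\}=Z\cap\psi([0,r])
\]
is Borel for each $r$. But this is immediate from the hypotheses of Theorem 3: $Z$ is Borel by assumption, and $\psi([0,r])$ is Borel by assumption. Consequently $\mu^{-1}(b)$ is Borel in $Z$ for every Borel $b\subseteq[0,1]$, and hence Borel in $\overline{B_{\|T\|}}$, which via the displayed identity yields the lemma.

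There is no serious obstacle here; everything is packaged into the observation that the ``minimum of preimage'' map $\mu$ inverts $\psi|_X$ and that its measurability is exactly what the standing hypotheses on $\psi$ and $Z$ are designed to supply. The only point worth flagging is that one should not try to argue through $b\cap X$ as a subset of $[0,1]$ (which would tempt a detour through Lusin--Souslin to show $X$ itself is Borel); rewriting $\psi(b\cap X)$ as $\mu^{-1}(b)$ bypasses this entirely.
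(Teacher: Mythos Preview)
Your proof is correct. The underlying idea is the same as the paper's: both arguments rest on the computation $\psi([0,r]\cap X)=Z\cap\psi([0,r])$ (the paper writes it as $\psi([0,r])\setminus Y$) together with a $\sigma$-algebra argument to pass from these generators to arbitrary Borel $b$. Your packaging is cleaner: by recognizing $\psi(b\cap X)=\mu^{-1}(b)$ for the inverse map $\mu=(\psi|_X)^{-1}$, you reduce everything to ``$\mu$ is Borel measurable,'' which lets the standard generator-plus-$\sigma$-algebra machinery run implicitly. The paper instead works explicitly, first handling $[0,t]$ and $[0,t)$, then open intervals via injectivity of $\psi|_X$, then open sets, and finally verifying by hand that $\{b:\psi(b\cap X)\text{ Borel}\}$ is a $\sigma$-algebra. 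Your route is shorter and avoids those intermediate cases; the paper's route is more self-contained but does redundant work, since the closed intervals $[0,r]$ already generate the Borel $\sigma$-algebra on $[0,1]$.
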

\begin{proof}
Note first that, for $t \in (0,1]$, we have $\psi ([0,t] \cap X) = \psi ([0,t]) \setminus Y$ and $\psi ([0,t) \cap X) = \psi ([0,t)) \setminus Y$, and these sets are Borel.  Now, since $\psi$ restricted to $X$ is an injection, we have $\psi ((\alpha , \beta) \cap X) = \psi ([0, \beta) \cap X) \setminus \psi ([0, \alpha] \cap X)$ which is Borel.  Since $[0,1]$ is second countable, an arbitrary open set $v = \bigcup_{n \in \Nats}u_n$ is the countable union of open intervals so that $\psi (v \cap X) = \psi (\bigcup_{n \in \Nats}(u_n \cap X)) = \bigcup_{n \in \Nats}(\psi (u_n \cap X))$ is Borel.

To complete the proof, we show that the collection of sets 
$$S = \{ b \subset [0,1] : \psi (b \cap X) \mathrm{\, is \, Borel} \}$$
 forms a $\sigma$-algebra.  Suppose that $\psi (b \cap X)$ is Borel.  Then $\psi (b^c \cap X) = \psi (X \setminus (b\cap X)) = Z \setminus \psi (b\cap X)$ is Borel.  Now suppose that $(b_n)_{n \in \Nats}\subset S$.  Then $\bigcup_{n\in \Nats} b_n \in S$ by the same argument used for open sets, and we are done.
\end{proof}

We now define $\mu (b) = \nu_T (\psi (b \cap X))$ for any Borel set $b \subset [0,1]$.  It is clear that $\mu$ is countably additive, and hence a Borel probability measure on $[0,1]$.  That $\mu$ is a regular measure follows from Theorem 1.1 of \cite{bil}.

\begin{obs}
For any Borel set $B \subset \overline{B_{\Vert T \Vert}}$, $\mu (\psi^{-1}(B)) = \nu_T (B)$.
\end{obs}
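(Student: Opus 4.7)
The plan is to unwind the definition of $\mu$ and reduce the identity $\mu(\psi^{-1}(B)) = \nu_T(B)$ to the fact that $\psi$ restricted to $X$ is a bijection onto $Z$ together with the hypothesis $\nu_T(Z) = 1$.

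First I would note that $\psi^{-1}(B)$ is Borel (since $\psi$ is Borel measurable and $B$ is Borel), so that $\mu(\psi^{-1}(B))$ is defined, and by definition equals $\nu_T\bigl(\psi(\psi^{-1}(B) \cap X)\bigr)$. The key geometric observation is that $\psi$ restricted to $X$ is injective with image exactly $Z$: each $t \in X$ is by construction of the form $\min(\psi^{-1}(z))$ for a unique $z \in Z$, and conversely every $z \in Z$ arises this way. So $\psi\vert_X : X \to Z$ is a bijection, and set-theoretically it follows that
\[
\psi(\psi^{-1}(B) \cap X) = \psi(\psi^{-1}(B \cap Z) \cap X) = B \cap Z.
\]

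Combining, $\mu(\psi^{-1}(B)) = \nu_T(B \cap Z)$. Since by hypothesis $\nu_T(Z) = 1$, we have $\nu_T(B \cap Z) = \nu_T(B)$, which finishes the proof.

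There is essentially no obstacle here; the only point that needs a word of justification is the bijectivity of $\psi\vert_X$, and this is immediate from the definition of $X$. The observation is really just the statement that pushing forward $\mu$ under $\psi$ (via the section $z \mapsto \min \psi^{-1}(z)$ of $\psi$) recovers the Brown measure on its almost-sure support $Z$, and then extends trivially to all Borel subsets of $\overline{B_{\Vert T \Vert}}$ because the complement $Y$ is $\nu_T$-null.
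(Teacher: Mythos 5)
Your proof is correct and follows essentially the same route as the paper: both unwind the definition $\mu(\psi^{-1}(B)) = \nu_T(\psi(\psi^{-1}(B)\cap X))$, use that $\psi$ restricted to $X$ is a bijection onto $Z$ to identify this set with $B\cap Z$, and conclude via $\nu_T(Z)=1$. Your write-up just makes explicit the bijectivity check and the final null-complement step that the paper leaves implicit.
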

\begin{proof}
Since $\psi$ is a bijection from $X$ to $Z$ we have
$$\mu (\psi^{-1} (B)) = \nu_T (\psi (\psi^{-1} (B) \cap X)) 
= \nu_T (B \cap Z) = \nu_T (B)$$
\end{proof}

Prior to constructing the spectral measure, we will need a map from the open subsets of the closed unit interval to the set of projections in $\Mcal$.  For an open interval, define 
\begin{align*}
&F(\emptyset) = 0\\
&F((\alpha , \beta )) = P_T(\psi ([0,\beta))) - P_T(\psi ([0,\alpha]))\\
&F([0,\beta)) = P_T (\psi ([0,\beta)))\\
&F((\alpha,1]) = 1-P_T (\psi ([0,\alpha])).
\end{align*}

Since $P_T (\psi ([0,t]))$ and $P_T (\psi ([0,t)))$ are increasing in $t$, it follows that $F(u)$ is increasing in $u$, and $F(u_1)F(u_2) = 0$ if $u_1 \cap u_2 = \emptyset$.  For $u_1 = (\alpha_1,\beta_1)$ and $u_2 = (\alpha_2,\beta_2)$ with $\alpha_1 \leq \alpha_2 \leq \beta_1 \leq \beta_2$, 
\begin{align*}
F(u_1)F(u_2) &= (P_T(\psi ([0,\beta_1))) - P_T(\psi ([0,\alpha_1])))(P_T( \psi ([0,\beta_2))) - P_T( \psi ([0,\alpha_2])))\\
&= P_T( \psi ([0,\beta_1))) - P_T(\psi ([0,\alpha_2])) - P_T(\psi ([0,\alpha_1])) + P_T(\psi ([0,\alpha_1]))\\
&=F(u_1 \cap u_2).
\end{align*}
Hence for any open intervals $u_1$ and $u_2$, $F(u_1)F(u_2) = F(u_1 \cap u_2)$.

For an arbitrary open set $v \subset [0,1]$, we first write $v = \bigcup_{n \in \Nats} u_n$, where the $u_n$ are pairwise disjoint, and all nonempty $u_n$ are open intervals.  Then $\sum_{n \in \Nats} F(u_n)$ converges to a projection in the strong operator topology.  We define $F(v) = \sum_{n \in \Nats} F(u_n)$.  Multiplication of the series and application of the corresponding result for intervals gives us $F(v_1)F(v_2) = F(v_1 \cap v_2)$ for open sets $v_1,v_2 \subset [0,1]$.

\begin{obs}
For any open set $v \subset [0,1]$, $\tau (F(v)) = \mu (v)$.
\end{obs}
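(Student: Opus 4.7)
The plan is to verify the identity first on open intervals, where $F$ is given directly as a combination of Haagerup--Schultz projections, and then extend to arbitrary open sets by countable additivity of both $\tau \circ F$ and $\mu$.

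The backbone is the preliminary identity
\[
\psi([0,t] \cap X) \;=\; \psi([0,t]) \cap Z \qquad (t \in [0,1]),
\]
together with its analogue for $[0,t)$. The inclusion $\subseteq$ is immediate, and for $\supseteq$, if $z = \psi(s) \in Z$ with $s \le t$, then $\min \psi^{-1}(z) \le s \le t$, so $z \in \psi([0,t] \cap X)$. Since $\nu_T(Y) = 1 - \nu_T(Z) = 0$ by hypothesis, it follows that
\[
\mu([0,t]) \;=\; \nu_T(\psi([0,t] \cap X)) \;=\; \nu_T(\psi([0,t])),
\]
and likewise $\mu([0,t)) = \nu_T(\psi([0,t)))$. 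This is the only step where the hypothesis $\nu_T(Z) = 1$ enters the proof.

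For an open interval $(\alpha,\beta)$, since $\psi$ restricted to $X$ is injective we have $\psi((\alpha,\beta) \cap X) = \psi([0,\beta) \cap X) \setminus \psi([0,\alpha] \cap X)$ with the smaller set contained in the larger, so
\[
\mu((\alpha,\beta)) \;=\; \nu_T(\psi([0,\beta))) - \nu_T(\psi([0,\alpha])).
\]
On the other side, property~(1) of Theorem~6 gives $\tau(P_T(B)) = \nu_T(B)$, so
\[
\tau(F((\alpha,\beta))) \;=\; \nu_T(\psi([0,\beta))) - \nu_T(\psi([0,\alpha])),
\]
and the two match. The intervals $[0,\beta)$ and $(\alpha,1]$ are handled identically, the latter using $\mu([0,1]) = \nu_T(Z) = 1$ to evaluate the complement. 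Finally, for an arbitrary open $v = \bigsqcup_{n \in \Nats} u_n$ with pairwise disjoint open intervals $u_n$, the $F(u_n)$ are mutually orthogonal projections summing strongly to $F(v)$ by construction; by normality of $\tau$ and countable additivity of $\mu$, $\tau(F(v)) = \sum_{n \in \Nats} \tau(F(u_n)) = \sum_{n \in \Nats} \mu(u_n) = \mu(v)$.

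I do not anticipate any real obstacle: the computation is direct, and the only substantive content is the measure-theoretic reduction $\nu_T(\psi([0,t] \cap X)) = \nu_T(\psi([0,t]))$ which leverages $\nu_T(Y) = 0$.
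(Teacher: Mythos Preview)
Your proof is correct and follows essentially the same approach as the paper: verify the identity on open intervals via $\tau(P_T(B)) = \nu_T(B)$ from Theorem~6, then pass to arbitrary open sets by countable additivity. The only difference is that you spell out in full the identity $\mu([0,t]) = \nu_T(\psi([0,t]))$ (using $\nu_T(Y)=0$), whereas the paper treats this as already established from the proof of Lemma~11 and uses it without further comment.
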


\begin{proof}
For an open interval $u = (\alpha, \beta)$, we have 
\begin{align*}
\tau (F(u)) &= \tau (P_T (\psi ([0,\beta))) - P_T (\psi ([0,\alpha])) \\
&= \nu_T (\psi ([0,\beta))) - \nu_T (\psi ([0, \alpha])) \\
&= \mu ([0,\beta)) - \mu ([0,\alpha]) \\
&= \mu (u).
\end{align*}
The observation follows from additivity of $\mu$, $F$ and $\tau$.
\end{proof}

We are now ready to define the spectral measure $E$.  
For any Borel set $B \subset \overline{B_{\Vert T \Vert}}$, define 
\[
E(B) = \wedge \{ F(v) : v \, is \, open\, and \, \psi^{-1} (B) \subset v \} .
\]
Note that $E$ is increasing and that the range of $E$ is contained in the von Neumann algebra generated by the projections $P_T (\psi ([0,t]))$ for $t \in [0,1]$, which is commutative.  We will prove later that $E$ defines a spectral measure.

\begin{prop}
For any Borel set $B \subset \overline{B_{\Vert T \Vert}}$, $\tau (E(B)) = \nu_T (B)$.
\end{prop}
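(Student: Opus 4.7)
The plan is to sandwich $\tau(E(B))$ between $\nu_T(B)$ from above and below, using the regularity of $\mu$, the multiplicativity $F(v_1)F(v_2) = F(v_1 \cap v_2)$, and normality of the trace.

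For the upper bound, observe that for any open $v \supset \psi^{-1}(B)$ we have $E(B) \leq F(v)$ by definition, so $\tau(E(B)) \leq \tau(F(v)) = \mu(v)$ by the preceding observation. Taking the infimum over such $v$ and invoking the regularity of $\mu$ (noted after Observation 11, from Billingsley), we get $\tau(E(B)) \leq \mu(\psi^{-1}(B))$, which equals $\nu_T(B)$ by Observation 11.

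For the lower bound, use regularity to pick open sets $v_n \supset \psi^{-1}(B)$ with $\mu(v_n) \to \nu_T(B)$, then replace $v_n$ by $v_1 \cap \cdots \cap v_n$ to ensure the sequence is decreasing; multiplicativity of $F$ on intersections of open sets makes the $F(v_n)$ decreasing as well. Since all the relevant projections live in the commutative von Neumann algebra generated by the $P_T(\psi([0,t]))$, the decreasing sequence $F(v_n)$ converges in SOT to $P := \wedge_n F(v_n)$, and normality of $\tau$ gives $\tau(P) = \lim \mu(v_n) = \nu_T(B)$. Clearly $E(B) \leq P$, so the content is to prove the reverse $P \leq E(B)$.

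The main obstacle, and the step that requires genuine work, is showing $P \leq E(B)$, i.e.\ that $P \leq F(v)$ for every open $v \supset \psi^{-1}(B)$. The idea is: the open sets $v \cap v_n$ still contain $\psi^{-1}(B)$ and satisfy $\mu(\psi^{-1}(B)) \leq \mu(v \cap v_n) \leq \mu(v_n) \to \nu_T(B)$, so $\mu(v \cap v_n) \to \nu_T(B)$ by squeeze. Using $F(v \cap v_n) = F(v)F(v_n)$ and normality of $\tau$,
\[
\tau(F(v)P) = \lim_{n} \tau(F(v)F(v_n)) = \lim_{n} \mu(v \cap v_n) = \nu_T(B) = \tau(P).
\]
Thus $\tau((1-F(v))P) = 0$; since $1-F(v)$ and $P$ commute and $\tau$ is faithful, this forces $(1-F(v))P = 0$, i.e.\ $P \leq F(v)$. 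Taking the meet over all such $v$ gives $P \leq E(B)$, hence $E(B) = P$ and $\tau(E(B)) = \nu_T(B)$.
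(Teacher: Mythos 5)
Your argument is correct, and on the crucial half it takes a genuinely different route from the paper. The upper bound is the same in both: $E(B)\leq F(v)$ for every open $v\supset\psi^{-1}(B)$, $\tau(F(v))=\mu(v)$ (Observation 13), and outer regularity of $\mu$ plus Observation 11 give $\tau(E(B))\leq\nu_T(B)$. For the lower bound the paper runs a two-sided $\epsilon$-sandwich: besides the regularity set $v_1$ it picks a second open set $v_2\supset\psi^{-1}(B)$ with $\tau(F(v_2))-\tau(E(B))<\epsilon$, which tacitly uses that the family $\{F(v)\}$ is downward directed (via $F(v)F(w)=F(v\cap w)$) so that normality of $\tau$ yields $\tau(E(B))=\inf_v\tau(F(v))$. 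You instead pin down the projection itself: you build a decreasing sequence of open neighborhoods $v_n$ with $\mu(v_n)\to\nu_T(B)$, set $P=\wedge_n F(v_n)$, get $\tau(P)=\nu_T(B)$ by normality, and then prove $P\leq F(v)$ for every admissible $v$ by the computation $\tau(F(v)P)=\lim_n\mu(v\cap v_n)=\tau(P)$ together with commutativity and faithfulness of $\tau$, concluding $E(B)=P$. Both proofs rest on the same ingredients (Observations 11 and 13, regularity of $\mu$, multiplicativity of $F$ on open sets, normality of $\tau$); yours additionally invokes faithfulness. The paper's version is shorter, while yours makes explicit the approximation step the paper leaves unstated and gives the slightly stronger by-product that $E(B)$ is realized as a countable meet $\wedge_n F(v_n)$ along any such sequence, not merely that the traces agree.
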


\begin{proof}
Let $\epsilon > 0$ be given.  There exist open sets $v_1,v_2 \subset [0,1]$ such that 
\begin{enumerate}
\item
$\psi^{-1} (B) \subset v_1$ and $\mu (v_1) - \mu (\psi^{-1} (B)) < \epsilon$, and
\item
$\psi^{-1} (B) \subset v_2$ and $\tau (F(v_2)) - \tau (E (B)) < \epsilon.$
\end{enumerate}
Applying Observations 12 and 13 to (1), we have 
\[
\tau (E(B)) - \nu_T (B) \leq \tau(F(v_1)) - \nu_T (B) = \mu (v_1) - \mu (\psi^{-1} (B)) < \epsilon.
\]
Applying Observations 12 and 13 to (2) gives 
\[
\nu_T (B) - \tau (E (B)) = \mu (\psi^{-1} (B)) - \tau (E (B)) 
\]
\[
\leq \mu (v_2) - \tau (E (B)) = \tau (F(v_2)) - \tau (E (B)) <\epsilon.
\]
Hence we have $\vert \tau (E(B)) - \nu_T (B) \vert < \epsilon$, and we are done.
\end{proof}



\begin{lemma}
If $B_1$ and $B_2$ are Borel subsets of $\overline{B_{\Vert T \Vert}}$, then $E(B_1)E(B_2) = E(B_1 \cap B_2)$.
\end{lemma}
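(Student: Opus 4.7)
The plan is to sandwich $E(B_1 \cap B_2)$ and $E(B_1) E(B_2)$: first I would show $E(B_1 \cap B_2) \leq E(B_1) E(B_2)$ directly from the definition of $E$, and then show that these two projections have the same trace. Since $\tau$ is faithful, equality would follow. Throughout, I would exploit the fact (already noted after the definition of $E$) that the range of $E$ and all the $F(v)$ lie in the commutative von Neumann algebra generated by $\{P_T(\psi([0,t])) : t \in [0,1]\}$, so its projection lattice is a complete Boolean algebra and the distributive identities $\bigwedge_\alpha (p_\alpha q) = (\bigwedge_\alpha p_\alpha) q$ and $\bigwedge_\alpha (p_\alpha \vee q) = (\bigwedge_\alpha p_\alpha) \vee q$ are at my disposal.

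For the first inequality, fix open $v_1 \supset \psi^{-1}(B_1)$ and $v_2 \supset \psi^{-1}(B_2)$. Then $v_1 \cap v_2$ is open and contains $\psi^{-1}(B_1 \cap B_2)$, so by the definition of $E$ and the multiplicativity of $F$ already established on open sets,
\[
E(B_1 \cap B_2) \leq F(v_1 \cap v_2) = F(v_1) F(v_2).
\]
Fixing $v_2$ and taking the meet over admissible $v_1$ (using commutativity) yields $E(B_1 \cap B_2) \leq E(B_1) F(v_2)$, and a further meet over $v_2$ gives $E(B_1 \cap B_2) \leq E(B_1) E(B_2)$.

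For the matching trace inequality, I would derive the dual statement $E(B_1 \cup B_2) \leq E(B_1) \vee E(B_2)$ by the same argument: if $v_i \supset \psi^{-1}(B_i)$, then $v_1 \cup v_2 \supset \psi^{-1}(B_1 \cup B_2)$ and in the commutative algebra
\[
F(v_1 \cup v_2) = F(v_1) + F(v_2) - F(v_1) F(v_2) = F(v_1) \vee F(v_2),
\]
so $E(B_1 \cup B_2) \leq F(v_1) \vee F(v_2)$, and two applications of distributivity of meet over join produce $E(B_1 \cup B_2) \leq E(B_1) \vee E(B_2)$. Combining this with Proposition 14 and the Boolean identity $\tau(p \vee q) = \tau(p) + \tau(q) - \tau(pq)$ (valid for commuting projections) gives
\[
\tau(E(B_1) E(B_2)) \leq \nu_T(B_1) + \nu_T(B_2) - \nu_T(B_1 \cup B_2) = \nu_T(B_1 \cap B_2) = \tau(E(B_1 \cap B_2)).
\]
Together with the first step and the faithfulness of $\tau$, this forces $E(B_1) E(B_2) = E(B_1 \cap B_2)$.

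The main obstacle is the lattice bookkeeping: pulling arbitrary meets past products or through joins requires that all projections involved commute, so the commutativity of the ambient algebra must be invoked at each step. Once that is admitted, the remaining ingredients are already on the table — multiplicativity and countable additivity of $F$ on open sets, Observation 13, and Proposition 14.
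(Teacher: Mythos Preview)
Your argument is correct. The first half --- showing $E(B_1\cap B_2)\le E(B_1)E(B_2)$ by passing from $F(v_1\cap v_2)=F(v_1)F(v_2)$ to the infima --- is exactly what the paper does. Where you diverge is in the second half. The paper obtains $\tau(E(B_1)E(B_2))\le\tau(E(B_1\cap B_2))$ by an explicit $\epsilon$-approximation: it covers $\psi^{-1}(B_1\cap B_2)$, $\psi^{-1}(B_1)\setminus\psi^{-1}(B_1\cap B_2)$, and $\psi^{-1}(B_2)\setminus\psi^{-1}(B_1\cap B_2)$ by open sets of small $\mu$-excess and tracks how the measures combine. You instead prove the dual inequality $E(B_1\cup B_2)\le E(B_1)\vee E(B_2)$ and then apply inclusion--exclusion for $\nu_T$ and for $\tau$ on commuting projections. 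Your route is cleaner and entirely lattice-theoretic, at the cost of invoking the complete-Boolean-algebra distributive law $\bigwedge_\alpha(p_\alpha\vee q)=(\bigwedge_\alpha p_\alpha)\vee q$ and the identity $F(v_1\cup v_2)=F(v_1)\vee F(v_2)$; the latter is not stated in the paper but follows immediately from multiplicativity of $F$ (hence monotonicity) together with Observation~13 and faithfulness of $\tau$. The paper's route is more pedestrian but self-contained, needing only the outer regularity of $\mu$.
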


\begin{proof}
Noting that whenever $v_1$ is an open set containing $\psi^{-1}(B_1)$ and $v_2$ is an open set containing $\psi^{-1}(B_2)$, $v_1 \cap v_2$ is an open set containing $\psi^{-1}(B_1) \cap \psi^{-1}(B_2)$, we have 
\begin{align*}
&E(B_1 \cap B_2) = \wedge \{ F(v) : v \, open, \psi^{-1}(B_1 \cap B_2) \subset v \} \\
&= \wedge \{ F(v) : v \, open, \psi^{-1}(B_1) \cap \psi^{-1}(B_2) \subset v \} \\
&\leq \wedge \{ F(v_1 \cap v_2) : v_1,v_2 \, open,\psi^{-1}(B_1) \subset v_1,\psi^{-1}(B_2) \subset v_2 \} \\
&= \wedge \{ F(v_1)F(v_2) : v_1,v_2 \, open,\psi^{-1}(B_1) \subset v_1,\psi^{-1}(B_2) \subset v_2 \} \\
&= \wedge \{ F(v_1) : v_1 \, open,\psi^{-1}(B_1) \subset v_1 \} \wedge \{ F(v_2) : v_2 \, open,\psi^{-1}(B_2) \subset v_2 \} \\
&= E(B_1)E(B_2).
\end{align*}

Now let $\epsilon >0$ be given.  There exist open subsets $v,\tilde{v_1},\tilde{v_2}$ of $[0,1]$ such that
\begin{enumerate}
\item
$\psi^{-1}(B_1 \cap B_2) \subset v$ and $\mu (v \setminus \psi^{-1}(B_1 \cap B_2)) < \epsilon$,
\item
$a_1 = \psi^{-1} (B_1) \setminus \psi^{-1} (B_1 \cap B_2) \subset \tilde{v_1}$ and $\mu (\tilde{v_1} \setminus a_1) < \epsilon$, and
\item
$a_2 = \psi^{-1} (B_2) \setminus \psi^{-1} (B_1 \cap B_2) \subset \tilde{v_2}$ and $\mu (\tilde{v_2} \setminus a_2) < \epsilon$.
\end{enumerate}
Let $v_i = \tilde{v_i} \cup v$ for $i=1,2$.  Then $v_1$ is an open set containing $\psi^{-1}(B_1)$ and $v_2$ is an open set containing $\psi^{-1}(B_2)$.   We have 
\[
\mu(v_1 \cap v_2 \setminus \psi^{-1}(B_1 \cap B_2)) \leq \mu(v \setminus \psi^{-1}(B_1 \cap B_2)) + \mu(\tilde{v_1} \cap \tilde{v_2} \setminus \psi^{-1}(B_1 \cap B_2)).
\]
Observing that $a_1 \cap a_2 = \emptyset$ and 
\[\tilde{v_1} \cap \tilde{v_2} = (a_1 \cap a_2) \cup ((\tilde{v_1} \setminus a_1) \cap a_2) \cup ((\tilde{v_2} \setminus a_2) \cap a_1) \cup ((\tilde{v_1} \setminus a_1) \cap (\tilde{v_2} \setminus a_2))
\]
we have 
\[
\mu((v_1 \cap v_2) \setminus \psi^{-1}(B_1 \cap B_2)) < 4 \epsilon.
\]
Applying Observations 12 and 13 and Proposition 14, we have 
\begin{align*}
\tau (E(B_1)E(B_2)) - \tau(E(B_1 \cap B_2)) &\leq \tau(F(v_1)F(v_2)) - \tau(E(B_1 \cap B_2)) \\
&= \tau(F(v_1 \cap v_2)) - \tau(E(B_1 \cap B_2)) \\
&< 4 \epsilon,
\end{align*}
and we conclude $E(B_1)E(B_2) = E(B_1 \cap B_2)$.
\end{proof}

\begin{lemma}
$E$ is countably additive on disjoint sets, where convergence of the series is in the strong operator topology.
\end{lemma}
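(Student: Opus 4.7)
The plan is to combine the multiplicativity from Lemma 15 with the trace formula from Proposition 14, using faithfulness of $\tau$ to upgrade equality of traces to equality of projections.

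First I would dispose of the empty-set case: applying Proposition 14 gives $\tau(E(\emptyset)) = \nu_T(\emptyset) = 0$, and since $\tau$ is faithful and $E(\emptyset)$ is a projection (being an infimum of projections in the commutative von Neumann algebra generated by the $P_T(\psi([0,t]))$), we obtain $E(\emptyset) = 0$. Next, for disjoint Borel sets $B_1, B_2, \ldots \subset \overline{B_{\Vert T\Vert}}$, Lemma 15 yields $E(B_i) E(B_j) = E(B_i \cap B_j) = E(\emptyset) = 0$ for $i \ne j$, so the $E(B_n)$ are pairwise orthogonal projections. Consequently the partial sums $\sum_{n=1}^N E(B_n)$ form an increasing sequence of projections bounded above by $1$, and therefore converge in the strong operator topology to a projection $P := \sum_{n=1}^\infty E(B_n)$.

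Let $B = \bigcup_n B_n$. For each fixed $n$, Lemma 15 gives $E(B_n) E(B) = E(B_n \cap B) = E(B_n)$, i.e.\ $E(B_n) \leq E(B)$. Since the $E(B_n)$ are mutually orthogonal subprojections of $E(B)$, their sum satisfies $P \leq E(B)$. On the other hand, using normality of $\tau$ together with Proposition 14,
\[
\tau(P) = \sum_{n=1}^\infty \tau(E(B_n)) = \sum_{n=1}^\infty \nu_T(B_n) = \nu_T(B) = \tau(E(B)).
\]
Therefore $E(B) - P$ is a positive element with trace zero, and faithfulness of $\tau$ forces $E(B) = P = \sum_{n=1}^\infty E(B_n)$, which is exactly countable additivity in the strong operator topology.

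I do not anticipate a genuine obstacle here: every ingredient has been prepared in the preceding observations, lemmas, and proposition. The only point requiring a little care is confirming SOT convergence of the partial sums before invoking normality of $\tau$, which is automatic because mutually orthogonal projections always sum in SOT to a projection bounded by $1$. Faithfulness of the tracial state $\tau$ then does all the remaining work of turning the trace identity into an operator identity.
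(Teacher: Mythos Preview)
Your proof is correct and follows essentially the same approach as the paper: use Lemma~15 to get pairwise orthogonality, observe that each $E(B_n)$ lies below $E(B)$ so the SOT sum $P$ does too, then apply Proposition~14 and faithfulness of $\tau$ to conclude $P=E(B)$. The only cosmetic differences are that you first establish $E(\emptyset)=0$ (the paper defers this to the verification that $E$ is a spectral measure) and you deduce $E(B_n)\le E(B)$ via Lemma~15 rather than the monotonicity of $E$ noted just after its definition.
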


\begin{proof}
Suppose $(B_n)_{n \in \Nats}$ is a countable collection of disjoint Borel subsets of $\overline{B_{\Vert T \Vert}}$.  By claim 7, $E(B_i)E(B_j) = 0$ if $i \neq j$.  Then $E(\bigcup_{n \in \Nats}B_n)$ is a superprojection of each $E(B_n)$, and hence a superprojection of $\sum_{n \in \Nats} E(B_n)$.  Also, $\tau(E(\bigcup_{n \in \Nats}B_n)) = \nu_T (\bigcup_{n \in \Nats} B_n) = \tau(\sum_{n \in \Nats} E(B_n))$.  We conclude $E(\bigcup_{n \in \Nats}B_n) = \sum_{n \in \Nats} E(B_n)$.
\end{proof}

We are now ready to show that $E$ is a spectral measure supported on $\supp (\nu_T)$.

\begin{proof}
We must show three things:
\begin{enumerate}
\item
$E(\emptyset) = 0$ and $E(\supp (\nu_T)) = 1$
\item
$E(B_1 \cap B_2) = E(B_1)E(B_2)$ for Borel sets $B_1,B_2$, and
\item
if $\Mcal$ acts on a Hilbert space $\mathcal{H}$, and $x,y \in \mathcal{H}$, then $\eta (B) = \langle E(B) x,y \rangle$ defines a regular Borel measure on $\Cpx$.
\end{enumerate}

\begin{enumerate}
\item
Follows from Proposition 14, since $\tau (E(\emptyset)) = 0$ and $\tau (E(\supp (\nu_T))) = 1$.
\item
Was proven as Lemma 15.
\item
That $\eta$ is countably additive on disjoint sets follows from Lemma 15.  Regularity of $\eta$ follows from Theorem 1.1 of \cite{bil}.
\end{enumerate}
\end{proof}

\section{Proof of theorem 3}

We first establish several results which will be used to prove Part 3.  Throughout this section, $\Mcal$, $T$, and $\psi$ are as described in Theorem 3, and $\mu$, $E$ and $E_v$ are as defined in Section 3.  $\Mcal$ acts on a Hilbert space $H$.

We now show that $\int_{\Cpx} zdE$ is the norm limit of conditional expectations onto an increasing sequence of abelian von Neumann algebras.  For each $n$, divide the $3 \Vert T \Vert$ by $3 \Vert T \Vert$ square centered at $0$ into $2^n$ by $2^n$ squares of equal size indexed $(A_{n,k})_{k=1}^{2^{2n}}$, $k$ increasing to the right then down.  Include in each $A_{n,k}$ the top and left edge, excluding the bottom-left and top-right corners, so that for each $n$, $A_{n,k} \cap A_{n,j} = \emptyset$ whenever $j \neq k$ and $\overline{B_{\Vert T \Vert}} \subset \cup_{k=1}^{2^{2n}} A_{n,k}$.  Let $D_n$ be the von Neumann algebra generated by the (orthogonal) projections $(E(A_{n,k}))_{k=1}^{2^{2n}}$.

\begin{prop}
Let $\mathbb{E}_{D_n}(T)$ denote the conditional expectation of $T$ onto $D_n$.  Then $\mathbb{E}_{D_n}(T)$ converges in norm as $n \to \infty$ to $\int_{\Cpx} zdE$.
\end{prop}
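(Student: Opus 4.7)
The plan is to reduce the claim to the identity
$$\tau(T\, E(B)) = \int_B z\, d\nu_T(z) \qquad (\ast)$$
for every Borel set $B \subseteq \overline{B_{\Vert T \Vert}}$. Granting $(\ast)$, the projections $\{E(A_{n,k})\}_k$ are mutually orthogonal by Lemma 15 and sum to $1$ by Proposition 14, so $D_n$ is abelian and $\mathbb{E}_{D_n}(T) = \sum_k c_{n,k} E(A_{n,k})$ with $c_{n,k} = \tau(T E(A_{n,k}))/\tau(E(A_{n,k}))$. By $(\ast)$, each $c_{n,k}$ is the $\nu_T$-barycenter of $A_{n,k}$ and hence lies in the closed convex hull of $A_{n,k}$. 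Letting $z_{n,k}$ denote the center of $A_{n,k}$, we obtain $|c_{n,k} - z_{n,k}| \leq \operatorname{diam}(A_{n,k}) = O(2^{-n})$, hence
$$\Bigl\Vert \mathbb{E}_{D_n}(T) - \sum_k z_{n,k} E(A_{n,k}) \Bigr\Vert = \sup_k |c_{n,k} - z_{n,k}| \longrightarrow 0,$$
and likewise $\Vert \int z\, dE - \sum_k z_{n,k} E(A_{n,k}) \Vert \leq \sup_k \operatorname{diam}(A_{n,k}) \to 0$; the triangle inequality then yields the desired norm convergence.

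To establish $(\ast)$ I would argue in three stages. First, for $B = \psi([0,t])$ the projection $E(B) = P_T(B)$ is $T$-invariant (Theorem 6), so writing $A = E(B)TE(B) \in E(B)\Mcal E(B)$, Theorem 8 applied with $p = E(B)$ gives $\nu_T|_B = \tau(E(B))\, \nu_A$, where $\nu_A$ is the Brown measure of $A$ in the corner algebra. Combined with the first-moment identity $\tau_{\mathrm{corner}}(A) = \int z\, d\nu_A(z)$ (a standard consequence of Brown's construction, obtained by expanding $\log\Delta(\lambda - A)$ for $|\lambda| \to \infty$), this yields $\tau(TE(B)) = \tau(A) = \tau(E(B)) \int z\, d\nu_A = \int_B z\, d\nu_T$. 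Second, I extend to $\tau(T F(v)) = \int_v \psi\, d\mu$ for open $v \subseteq [0,1]$ by decomposing $v$ into countably many disjoint open intervals and handling one interval $(\alpha, \beta)$ at a time, using also $P_T(\psi([0,\beta))) = \lim_{s\uparrow\beta} P_T(\psi([0,s]))$ (equality forced because the traces agree). Third, for arbitrary Borel $B$ I invoke outer regularity of $\mu$: choose an open $v \supseteq \psi^{-1}(B)$ with $\mu(v) - \mu(\psi^{-1}(B)) < \epsilon^2$; since $F(v) \geq E(B)$ we have $\Vert F(v) - E(B) \Vert_2^2 = \tau(F(v)) - \tau(E(B)) < \epsilon^2$, so Cauchy--Schwarz gives $|\tau(T(F(v) - E(B)))| \leq \Vert T \Vert_2\, \epsilon$, while the integral side of $(\ast)$ changes by at most $\Vert T \Vert\, \epsilon^2$; letting $\epsilon \to 0$ yields $(\ast)$.

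The main obstacle is that $E(A_{n,k})$ is not in general $T$-invariant, so $(\ast)$ cannot be read off directly from the Haagerup--Schultz machinery at the level of a single $A_{n,k}$; the third stage above---outer-regular $L^2$-approximation of $E(B)$ by $F(v)$---is the required bridge, and it relies crucially on $F(v) \geq E(B)$ as projections together with Observations 12 and 13 and Proposition 14.
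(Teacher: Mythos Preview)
Your proof is correct and follows the same overall arc as the paper's: both write $\mathbb{E}_{D_n}(T) = \sum_k c_{n,k}\, E(A_{n,k})$ with $c_{n,k} = \tau(TE(A_{n,k}))/\tau(E(A_{n,k}))$, identify each $c_{n,k}$ with the $\nu_T$-barycenter $\int_{A_{n,k}} z\, d\nu_T / \nu_T(A_{n,k})$, and then deduce norm convergence by uniform approximation of $z\mapsto z$ by the resulting simple functions. The difference lies in how the barycenter identification is justified. The paper dispatches it in one line by citing ``Brown's analog of Lidskii's theorem,'' but Brown--Lidskii by itself only yields $\tau(pTp)/\tau(p) = \int z\, d\nu_{pTp}$ for the \emph{corner} Brown measure; passing from this to $\int_{A_{n,k}} z\, d\nu_T$ requires $\tau(p)\,\nu_{pTp} = \nu_T|_{A_{n,k}}$, which is not automatic because $p = E(A_{n,k})$ need not be $T$-invariant. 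You isolate exactly this point as identity $(\ast)$ and supply a genuine proof of it, with the $L^2$-approximation $\|F(v) - E(B)\|_2 < \epsilon$ doing the real work in bridging from the $T$-invariant projections $P_T(\psi([0,t]))$ to an arbitrary $E(B)$. Your argument is thus more self-contained than the paper's; the paper's shortcut can be repaired in the special case $T \in D'$ (the only case where the proposition is later invoked) by appealing forward to Proposition~18, whereas your route handles general $T$ without forward references.
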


\begin{proof}
Observe that
$$\mathbb{E}_{D_n}(T) = \sum\limits_{\substack{1 \leq k \leq 2^{2n}\\ \tau(E(A_{n,k})) \neq 0}} \frac{\tau(E(A_{n,k})TE(A_{n,k}))}{\tau (E(A_{n,k}))}E(A_{n,k}).$$
Applying Brown's analog of Lidskii's theorem (see \cite{Br86}) gives
$$\mathbb{E}_{D_n}(T) = \sum\limits_{\substack{1 \leq k \leq 2^{2n} \\ \nu_T (A_{n,k}) \neq 0}} \frac{\int_{A_{n,k}} zd \nu_T (z)}{\nu_T (A_{n,k})}E(A_{n,k}).$$

For each $n$, define $$f_n (w) = \sum\limits_{\substack{1 \leq k \leq 2^{2n} \\ \nu_T (A_{n,k}) \neq 0}} \frac{\int_{A_{n,k}} zd \nu_T (z)}{\nu_T (A_{n,k})}\chi_{A_{n,k}}(w) + \sum\limits_{\substack{1 \leq k \leq 2^{2n} \\ \nu_T (A_{n,k}) = 0}} \frac{\int_{A_{n,k}} zd m (z)}{m (A_{n,k})}\chi_{A_{n,k}}(w),$$
where $m$ is the Lebesgue measure on $\Cpx$.

Since $\nu_T(A_{n,k}) = 0$ implies $E(A_{n,k}) = 0$, $\int_{\Cpx} f_n dE = \mathbb{E}_{D_n}(T)$.  Note that $f_n$ converges uniformly on $\supp(E)$ to the inclusion function $f(z) = z$.  Hence $\int_{\Cpx} f_n dE$ converges in norm to $\int_{\Cpx} zdE$, and we are done.
\end{proof}

Let $D$ be the von Neumann algebra generated by $(E(\psi ([0,t])))_{t \in [0,1]}$ (or equivalently by $\bigcup_{n=1}^{\infty} D_n$).

\begin{prop}
Suppose that $T \in D'$ and $B \subset \overline{B_{\Vert T \Vert}}$ is Borel with $\nu_T (B) \neq 0$.  Then the Brown measure of $E(B)TE(B)$, considered as an element of $E(B) \Mcal E(B)$, is concentrated in $B$.
\end{prop}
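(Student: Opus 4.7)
Let $p = E(B) \in D$. The hypothesis $T \in D'$ makes $T$ commute with $p$, so $E(B)TE(B) = Tp = pT$, and with $C = (1-p)T(1-p) \in (1-p)\Mcal(1-p)$, Theorem 8 together with Proposition 14 gives
\[
\nu_T = \nu_T(B)\,\nu_{E(B)TE(B)} + \nu_T(B^c)\,\nu_C.
\]
The plan is to establish the conclusion by Dynkin's $\pi$-$\lambda$ theorem applied to the class
\[
\mathcal{C} = \{B' \subset \overline{B_{\Vert T \Vert}} \text{ Borel} : \nu_T(B') = 0 \text{ or } \nu_{E(B')TE(B')} \text{ is concentrated in } B'\},
\]
showing $\mathcal{C}$ contains the generating $\pi$-system $\{\psi([0,t]) : t \in [0,1]\}$ and is a Dynkin $\lambda$-class, then extending to all Borel $B$.

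For the base case $B' = \psi([0,t])$ I would first derive $E(\psi([0,t])) = P_T(\psi([0,t]))$: every admissible open cover of $\psi^{-1}(\psi([0,t]))$ contains an initial segment $[0, t+\varepsilon)$ on which $F$ equals $P_T(\psi([0, t+\varepsilon)))$, and Proposition 14 closes the trace gap as $\varepsilon \downarrow 0$. With this identification, property (3) of Theorem 6 places the Brown measure of $T P_T(\psi([0,t]))$ inside $\psi([0,t])$, and since $T \in D'$ commutes with $P_T(\psi([0,t]))$, this operator equals $E(B')TE(B')$, giving $\psi([0,t]) \in \mathcal{C}$. The family $\{\psi([0,t])\}$ is totally ordered by inclusion, hence a $\pi$-system.

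To verify $\mathcal{C}$ is a $\lambda$-class: closure under complementation follows by evaluating the displayed Theorem 8 decomposition at $B'^c$ (the contribution of $\nu_A$ vanishes by hypothesis on $B'$), forcing $\nu_C(B'^c) = 1$. Closure under countable disjoint unions uses Lemmas 15 and 16: if $B' = \bigsqcup_n B_n$ with each $B_n \in \mathcal{C}$, the projections $E(B_n)$ are pairwise orthogonal and SOT-summable to $E(B')$, each commuting with $T$, so $E(B')TE(B') = \bigoplus_n E(B_n)TE(B_n)$ is block-diagonal in $E(B')\Mcal E(B')$. Iterated application of Theorem 8 (with a tail-weight vanishing argument for the countable case) then writes $\nu_{E(B')TE(B')} = \sum_n (\nu_T(B_n)/\nu_T(B'))\,\nu_{E(B_n)TE(B_n)}$, a convex combination supported in $\bigcup_n B_n = B'$. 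Dynkin's theorem yields $\mathcal{C} \supset \sigma(\{\psi([0,t]) : t \in [0,1]\})$.

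The hardest step will be extending $\mathcal{C}$ from this generated $\sigma$-algebra to all Borel subsets of $\overline{B_{\Vert T \Vert}}$, which may be strictly larger. My plan is to exploit the fact that $E(B)$ depends on $B$ only through $\psi^{-1}(B) \subset [0,1]$ (by the infimum definition via open covers), together with the outer regularity of the Borel measure $\mu$ on $[0,1]$, to produce for each Borel $B$ a set $\tilde{B} \in \sigma(\{\psi([0,t])\})$ with $E(\tilde{B}) = E(B)$ and $\nu_T(B \triangle \tilde{B}) = 0$. Then $\nu_{E(B)TE(B)} = \nu_{E(\tilde{B})TE(\tilde{B})}$ is concentrated in $\tilde{B}$ by the Dynkin step, and an absolute continuity comparison of this Brown measure with the normalized restriction $\nu_T(\cdot \cap B)/\nu_T(B)$, obtained cumulatively from the Theorem 8 decomposition over refinements already in $\mathcal{C}$, forces concentration in $B$ itself.
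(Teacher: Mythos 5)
Your outline is essentially correct but organizes the proof quite differently from the paper. The paper never passes through $\sigma(\{\psi([0,t]):t\in[0,1]\})$ at all: it shows $\nu_{TF(u)}$ is concentrated in $\psi(u\cap X)$ for open intervals $u$ via Theorem 6, extends to open $v=\bigcup_n u_n$ by exactly the tail-trace argument you propose for disjoint unions, notes that $E(B)\le F(v)$ for every open $v\supseteq\psi^{-1}(B)$ so that $\nu_{TE(B)}$ is concentrated in $\psi(v\cap X)$, and then finishes with outer regularity of $\mu$ plus one more application of Theorem 8, which bounds $\tau(E(B))\,\nu_{TE(B)}(\psi(v\cap X)\setminus B)$ by $\nu_T(\psi(v\cap X))-\nu_T(B)<\epsilon$. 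Your Dynkin route is workable --- the base case is not circular, since the identification $E(\psi([0,t]))=P_T(\psi([0,t]))$ needs only Proposition 14 and Theorem 6; complementation uses $E(B'^c)=1-E(B')$ (Proposition 14 plus Lemma 15 and faithfulness of $\tau$); and countable disjoint unions go through by the tail argument --- but the extension step you flag is where all the remaining content sits, and as phrased it needs repair: outer regularity of $\mu$ does not by itself place a representative $\tilde B$ in the generated $\sigma$-algebra, because the natural sets $\psi(v\cap X)$ involve deleting $Y$, which need not lie in $\sigma(\{\psi([0,t])\})$. The fix is to take open $v_n\supseteq\psi^{-1}(B)$ with $\mu(v_n)\downarrow\mu(\psi^{-1}(B))$, replace each $\psi(v_n\cap X)$ by the corresponding union of sets $\psi([0,\beta))\setminus\psi([0,\alpha])$ (these differ only inside the $\nu_T$-null set $Y$), and let $\tilde B$ be the intersection; then $\nu_T(B\triangle\tilde B)=0$, hence $E(B\triangle\tilde B)=0$ and $E(\tilde B)=E(B)$ by Proposition 14 and Lemma 16. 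Likewise your ``absolute continuity comparison'' needs no cumulative refinement: since $T\in D'$ commutes with $E(B)$, Theorem 8 gives at once $\tau(E(B))\,\nu_{TE(B)}\le\nu_T$, so concentration in $\tilde B$ together with $\nu_T(\tilde B\setminus B)=0$ forces concentration in $B$. The net comparison: your approach separates the measure-theoretic bookkeeping (Dynkin) from the operator-theoretic input, but its final step consumes exactly the two ingredients --- outer regularity of $\mu$ and the Theorem 8 inequality --- that constitute the paper's entire, shorter argument, so the $\pi$-$\lambda$ machinery is mostly overhead rather than a genuine saving.
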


\begin{proof}
We begin by observing that for any open $v \subset [0,1]$, with $\tau (F(v)) \neq 0$, $F(v) \in D$ and if $v = (\alpha,\beta)$ is an open interval, then $\nu_{TF(v)}$ is concentrated in $\psi ([0,\beta)) \setminus \psi ([0,\alpha])$, and hence is also concentrated in $\psi ((\alpha,\beta)) \cap Z$, where $Z$ is as described in Section 3.  Thus $\nu_{TF(v)}$ is concentrated in $\psi ((\alpha,\beta)\cap X)$.

Now suppose that $v = \bigcup_{n=1}^{\infty} u_n$ where all nonempty $u_n$ are pairwise disjoint open intervals.  Let $\epsilon > 0$ be given.  Let $N$ be so large that 
\[
\tau \left( \sum_{n = 1}^N F(u_n) \right) > \tau(F(v)) (1 - \epsilon).
\]
Then, since each $F(u_n)$ commutes with $T$, Theorem 8 gives 
\[
\nu_{TF(v)} = \frac{1}{\tau(F(v))} \left( \sum_{n=1}^N \tau(F(u_n) )\nu_{TF(u_n)} + \tau \left( \sum_{n=N+1}^\infty F(u_n) \right) \nu_{(\sum_{n=N+1}^\infty F(u_n))T} 
\right).
\]
Hence, since each $\nu_{TF(u_n)}$ is concentrated in $\psi (u_n \cap X) \subset \psi (v \cap X)$, we have
\[
\nu_{TF(v)}(\psi (v \cap X)) \geq \frac{1}{\tau(F(v))} \left( \sum_{n=1}^N \tau(F(u_n)\right) \nu_{TF(u_n)} (\psi (v \cap X)) > 1-\epsilon,
\]
so that $\nu_{TF(v)}$ is concentrated in $\psi (v \cap X)$.

Now observe that when $v$ is an open set containing $\psi^{-1}(B)$, since 
\[
\nu_{TF(v)} = \frac{1}{\tau(F(v))}(\tau (E(B)) \nu_{TE(B)} + \tau(F(v) - E(B)) \nu_{(F(v) - E(B))T}), 
\]
$\nu_{TE(B)}$ is concentrated in $\psi (v \cap X)$.

Choose an open set $v \subset [0,1]$ such that $\psi^{-1}(B) \subset v$ and $\mu(v) - \mu(\psi^{-1}(B)) < \epsilon$.  Then using Theorem 7 and Observation 11, 
\begin{align*}
\epsilon &> \nu_T (\psi (v \cap X)) - \nu_T (B)\\
 &= \tau (E(B)) \nu_{TE(B)} (\psi (v \cap X) \setminus B) + (1 - \tau (E(B))) \nu_{(1-E(B))T} (\psi (v \cap X) \setminus B)\\
 &\geq \tau (E(B)) \nu_{TE(B)} (\psi (v \cap X) \setminus B).\\
\end{align*}
Hence
$$\tau (E(B)) - \epsilon < \tau (E(B))(1 - \nu_{TE(B)}(\psi (v \cap X) \setminus B)) = \tau (E(B))(\nu_{TE(B)}(B)).$$
Thus
$$1 - \frac{\epsilon}{\tau (E(B))} < \nu_{TE(B)} (B).$$
Letting $\epsilon$ tend to $0$ gives the desired result.
\end{proof}

\begin{lemma}
If $T \in D'$, then the Brown measure of $T - \mathbb{E}_{D_n}(T)$ is supported in the ball of radius $\frac{6 \sqrt{2} \Vert T \Vert}{2^n}$.
\end{lemma}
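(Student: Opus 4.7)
The plan is to reduce the computation of $\nu_{T - \mathbb{E}_{D_n}(T)}$ to a block-diagonal computation over the projections $E(A_{n,k})$, and on each block invoke Proposition 19 to localize the Brown measure in the square $A_{n,k}$. First I would record that $\mathbb{E}_{D_n}(T)\in D_n\subseteq D$, so $T-\mathbb{E}_{D_n}(T)\in D'$ and therefore commutes with every $E(A_{n,k})$. From the explicit formula derived in the proof of Proposition 17,
$$\mathbb{E}_{D_n}(T)=\sum_k c_{n,k}\,E(A_{n,k}),\qquad c_{n,k}:=\frac{1}{\nu_T(A_{n,k})}\int_{A_{n,k}} z\,d\nu_T(z),$$
with the convention that the $k$-th term is dropped when $\nu_T(A_{n,k})=0$ (which by Proposition 14 is precisely when $E(A_{n,k})=0$). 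In particular $c_{n,k}\in A_{n,k}$ whenever $E(A_{n,k})\neq 0$.

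Next I would iterate Theorem 8 across the finite orthogonal partition $1=\sum_k E(A_{n,k})$: because $T-\mathbb{E}_{D_n}(T)$ commutes with each $E(A_{n,k})$, repeated application of the Brown-measure identity in Theorem 8 yields
$$\nu_{T-\mathbb{E}_{D_n}(T)}=\sum_k \tau(E(A_{n,k}))\,\nu_{E(A_{n,k})(T-\mathbb{E}_{D_n}(T))E(A_{n,k})},$$
where the $k$-th term is computed in the compressed factor $E(A_{n,k})\Mcal E(A_{n,k})$. The support of the left-hand side therefore lies in the union of the supports of the summands, so it suffices to control each block.

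For each $k$ with $E(A_{n,k})\neq 0$ we have
$$E(A_{n,k})(T-\mathbb{E}_{D_n}(T))E(A_{n,k})=E(A_{n,k})T E(A_{n,k})-c_{n,k}E(A_{n,k}).$$
By Proposition 19 applied with $B=A_{n,k}$, the Brown measure (in $E(A_{n,k})\Mcal E(A_{n,k})$) of $E(A_{n,k})TE(A_{n,k})$ is concentrated in $A_{n,k}$; subtracting the scalar multiple $c_{n,k}E(A_{n,k})$ merely translates that Brown measure by $-c_{n,k}$, so the block's Brown measure is supported in $A_{n,k}-c_{n,k}$. Since $A_{n,k}$ is a square of side $3\Vert T\Vert/2^n$ and $c_{n,k}\in A_{n,k}$, the set $A_{n,k}-c_{n,k}$ sits inside the disk of radius $3\sqrt 2\,\Vert T\Vert/2^n$, which is a fortiori inside $\overline{B_{6\sqrt 2\Vert T\Vert/2^n}}$.

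The only genuine subtlety I anticipate is the bookkeeping in the first step: one must be careful that the decomposition from Theorem 8 is applied legitimately to an orthogonal family of commuting, hyperinvariant projections, and that the case $\tau(E(A_{n,k}))=0$ is handled (it forces $E(A_{n,k})=0$ since $\Mcal$ is a factor, so that block simply drops out). Once that is clear, everything else follows from Proposition 19 and the geometry of the squares $A_{n,k}$.
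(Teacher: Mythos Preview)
Your argument is correct and follows essentially the same route as the paper: decompose $T-\mathbb{E}_{D_n}(T)$ blockwise over the projections $E(A_{n,k})$, use Theorem~8 to express the Brown measure as a convex combination of the block measures, invoke the concentration result (this is Proposition~18 in the paper's numbering, not~19 --- Lemma~19 is the statement you are proving) to localize each block in $A_{n,k}$, and then translate by the scalar $c_{n,k}$. Apart from that label slip, your write-up matches the paper's proof, and you even note the sharper radius $3\sqrt{2}\Vert T\Vert/2^n$ that the paper's estimate implicitly yields.
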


\begin{proof}
The key observation is that for any $\alpha \in \Cpx$, if $\nu_{T-\alpha}$ is the Brown measure of $T - \alpha$, then for any Borel set $B \subset \Cpx$, $\nu_{T-\alpha}(B) = \nu_T (B-\alpha)$.  Since whenever $E(A_{n,k}) \neq 0$ the Brown measure of $TE(A_{n,k})$ is supported in $A_{n,k}$, the Brown measure of $(T-\frac{\tau (TE(A_{n,k}))}{\tau(E(A_{n,k}))})E(A_{n,k})$ is supported in the square centered at 0 with edge length $\frac{6 \Vert T \Vert}{2^n}$.  We complete the proof by observing that $T- \mathbb{E}_{D_n} (T) = \sum_{k=1}^{2^{2n}} \left (T- \frac{\tau (TE(A_{n,k}))}{\tau(E(A_{n,k}))} \right )E(A_{n,k})$ and applying Theorem 8 to compute the Brown measure of the sum.
\end{proof}

We now are ready to prove Theorem 3.

\begin{proof}
\begin{enumerate}
\item
Whenever $v$ is an open set containing $\psi^{-1} (\psi ([0,t]))$, there exists $\epsilon > 0$ such that $[0,t+\epsilon) \subset v$ so we see that 
\[
P_T (\psi ([0,t])) \leq F([0,t + \epsilon)) \leq F(v).
\]
Hence we see that 
\[
P_T (\psi ([0,t])) \leq E(\psi ([0,t])).
\]
By Proposition 14 and Theorem 6, 
\[
\tau (P_T (\psi ([0,t]))) = \tau (E(\psi ([0,t])))
\]
so that 
\[
P_T (\psi ([0,t])) = E(\psi ([0,t])).
\]
\item
Was proven as Proposition 14.
\item
We show this first in the case that $T \in D'$.  Observe from the proof of Proposition 17 that $\Vert \mathbb{E}_D (T) - \mathbb{E}_{D_n} (T) \Vert \leq \frac{3 \sqrt{2} \Vert T \Vert}{2^n}$.  The rest of this argument is taken from the proof of Lemma 24 in \cite{DSZ}.

We assume without loss of generality that $\Vert T \Vert \leq 1/2$.  Fix $n \in \Nats$ and a unit vector $\xi \in H$.  By assumption $T \in D'$, so we have 
$$(T - \mathbb{E}_D (T))^{2m} = \sum_{k=0}^{2m} (-1)^k {{2m} \choose k} (\mathbb{E}_D (T) - \mathbb{E}_{D_n} (T))^{2m-k} (T - \mathbb{E}_{D_n} (T))^k.$$

Since $\Vert T \Vert \leq 1/2$, both $\mathbb{E}_D (T) - \mathbb{E}_{D_n} (T)$ and $T - \mathbb{E}_{D_n} (T)$ are contractions.  For $k \leq m$ and any $\eta \in H$, we have 
$$\Vert (\mathbb{E}_D (T) - \mathbb{E}_{D_n} (T))^{2m-k}(T - \mathbb{E}_{D_n} (T))^k \eta \Vert_H \leq \Vert \mathbb{E}_D (T) - \mathbb{E}_{D_n} (T) \Vert^m.$$
For $k > m$ and any $\eta \in H$ we have 
$$\Vert (\mathbb{E}_D (T) - \mathbb{E}_{D_n} (T))^{2m-k}(T- \mathbb{E}_{D_n} (T))^k \eta \Vert_H \leq \Vert (T - \mathbb{E}_{D_n} (T))^m \eta \Vert_H.$$
Hence for any $\eta \in H$,
\begin{equation}
\Vert (T- \mathbb{E}_D (T))^{2m} \eta \Vert_H \leq 2^{2m} \max \left\{ \left (\frac{3 \sqrt{2} \Vert T \Vert}{2^n} \right )^m,\Vert (T - \mathbb{E}_{D_n} (T))^m \eta \Vert_H \right\}.
\end{equation}

By Lemma 19, the Brown measure of $T - \mathbb{E}_{D_n} (T)$ is supported in the ball of radius $\frac{6 \sqrt{2} \Vert T \Vert }{2^n}$ centered at 0.  By the Haagerup-Schultz characterization (10), there exists a sequence $\xi_m \to \xi$ such that $\Vert \xi_m \Vert_H = 1$ and 
$$\limsup_{m \to \infty} \Vert (T - \mathbb{E}_{D_n} (T))^m \xi_m \Vert_H^{1/m} \leq \frac{6 \sqrt{2} \Vert T \Vert}{2^n}.$$
Hence there exists $M$ (depending on $n$) such that 
$$\Vert (T -\mathbb{E}_{D_n} (T))^m \xi_m \Vert_H \leq \left (\frac{7 \sqrt{2} \Vert T \Vert}{2^n} \right )^m,\;\; m>M.$$
Taking $\eta = \xi_m$ in (1), we have
$$\Vert (T - \mathbb{E}_D (T))^{2m} \xi_m \Vert_H^{1/m} \leq \frac{28 \sqrt{2} \Vert T \Vert}{2^n},\;\; m>M.$$
Since $\xi$ was arbitrary, it follows from characterization (10) that the Brown measure of $(T - \mathbb{E}_D (T))^2$ is supported in the ball of radius $\frac{28 \sqrt{2} \Vert T \Vert}{2^n}$ centered at 0.  Letting $n \to \infty$, we obtain that the Brown measure of $T - \mathbb{E}_D (T)$ is $\delta_0$.

For $T \notin D'$, we first show that $P_T(\psi ([0,t])) = P_{\mathbb{E}_{D'}(T)}(\psi([0,t]))$ for all $t \in [0,1]$.  For any $t$, $P_T(\psi([0,t])) \in D$, so $$\mathbb{E}_{D'}(T) P_T(\psi([0,t])) =  P_T(\psi([0,t])) \mathbb{E}_{D'}(T) P_T(\psi([0,t])).$$ 
By Lemma 7, $T$ and $\mathbb{E}_{D'}(T)$ have the same Brown measure, so we have for all $t$
$$\tau(P_T(\psi([0,t]))) = \nu_T(\psi([0,t])) = \nu_{\mathbb{E}_{D'}(T)}(\psi([0,t])).$$
For any $s,t \in [0,1]$ $P_T(\psi([0,s]))$ is $TP_T(\psi([0,t]))$ invariant, so by Lemma 7 $TP_T(\psi([0,t]))$ and $\mathbb{E}_{D'}(TP_T(\psi([0,t])))$ have the same Brown measure for any $t$, so whenever $P_T(\psi([0,t])) \neq 0$ we have
$$\nu_{\mathbb{E}_{D'}(T)P_T(\psi([0,t]))} = \nu_{\mathbb{E}_{D'}(TP_T(\psi([0,t])))} = \nu_{TP_T(\psi([0,t]))}$$
is supported in $\psi([0,t])$.  Similarly $P_T(\psi([0,s]))$ is $(1-P_T(\psi([0,t])))T$ invariant for all $s,t \in [0,1]$, so $(1-P_T(\psi([0,t])))T$ and $\mathbb{E}_{D'}((1-P_T(\psi([0,t])))T) = (1-P_T(\psi([0,t])))\mathbb{E}_{D'}(T)$ have the same Brown measure, which is supported in $\Cpx \setminus \psi([0,t])$ whenever $P_T(\psi([0,t])) \neq 1$.  Hence by Theorem 6 $P_T(\psi([0,t]))$ is the Haagerup-Schultz projection of $\mathbb{E}_{D'}(T)$ associated with the set $\psi([0,t])$.

Since $P_T(\psi ([0,t])) = P_{\mathbb{E}_{D'}(T)}(\psi([0,t]))$ for all $t \in [0,1]$, we see that $\psi$ generates the same spectral measure $E$ and abelian subalgebra $D$ for both $T$ and $\mathbb{E}_{D'}(T)$.  Applying Lemma 7 we have ${T-\int_\Cpx z dE}$ and ${\mathbb{E}_{D'}(T)-\int_\Cpx zdE}$ have the same Brown measure, which we have shown is $\delta_0$.
\end{enumerate}
\end{proof}

\begin{bibdiv}
\begin{biblist}

\bib{bil}{book}{
  author={Billingsley, Patrick},
  title={Convergence of Probability Measures},
  publisher={John Wiley and Sons},
  address={New York},
  date={1968}
}

\bib{Br86}{article}{
  author={Brown, Lawrence G.},
  title={Lidskii's theorem in the type II case},
  conference={
    title={Geometric methods in operator algebras},
    address={Kyoto},
    date={1983}
  },
  book={
    series={Pitman Res. Notes Math. Ser.}, 
    volume={123},
    publisher={Longman Sci. Tech.},
    address={Harlow},
    date={1986}
  },
  pages={1--35}
}

\bib{DSZ}{article}{
  author={Dykema, Ken},
   author={Sukochev, Fedor},
   author={Zanin, Dmitriy},
   title={A decomposition theorem in II$_1$--factors},
   journal={J. reine angew. Math.},
   status={to appear},
  eprint={http://arxiv.org/abs/1302.1114}
}

\bib{DSZ2}{article}{
  author={Dykema, Ken},
   author={Sukochev, Fedor},
   author={Zanin, Dmitriy},
   title={Holomorphic Functional Calculus on Upper Triangular Forms in Finite von Neumann Algebras},
   status={preprint},
  eprint={http://arxiv.org/abs/1310.2524}
}

 \bib{HS2}{article}{
   author={Haagerup, Uffe},
   author={Schultz, Hanne},
   title={Brown measures of unbounded operators affiliated with a finite von Neumann algebra},
   journal={Math. Scand.},
   volume={100},
   date={2007},
   pages={209--263}
 }

\bib{HS09}{article}{
  author={Haagerup, Uffe},
  author={Schultz, Hanne},
  title={Invariant subspaces for operators in a general II$_1$--factor},
  journal={Publ. Math. Inst. Hautes \'Etudes Sci.},
  number={109},
  year={2009},
  pages={19-111}
}

\bib{zha}{book}{
author={Zheng, Fuzhen},
title={Matrix Theory},
subtitle={Basic results and techniques},
edition={Second edition},
series={Universitext},
publisher={Springer},
address={New York},
date={2011}
}

\end{biblist}
\end{bibdiv}

\end{document}